%
\documentclass[11pt, a4paper,reqno]{amsart}

\addtolength{\textheight}{2cm}
\addtolength{\textwidth}{2cm}
\addtolength{\oddsidemargin}{-1cm}
\addtolength{\evensidemargin}{-1cm}
\addtolength{\topmargin}{-1cm}

\usepackage{color} \definecolor{bleu_sombre}{rgb}{0,0,0.6}  \definecolor{rouge_sombre}{rgb}{0.8,0,0}\definecolor{vert_sombre}{rgb}{0,0.6,0}
\usepackage[plainpages=false,colorlinks,linkcolor=bleu_sombre,citecolor=rouge_sombre,urlcolor=vert_sombre,breaklinks]{hyperref}

\usepackage[english]{babel}

\usepackage{amsmath,amssymb,amsthm,graphicx,amsfonts,url,color,enumerate,dsfont,stmaryrd,mathabx}

\theoremstyle{plain}
\newtheorem{theorem}{{Theorem}}[section] 
\newtheorem*{theorem*}{{Theorem}}
\newtheorem{proposition}[theorem]{Proposition}
\newtheorem*{proposition*}{Proposition}

\newtheorem*{corollary*}{Corollary}
\newtheorem{lemma}[theorem]{Lemma}
\newtheorem*{lemma*}{Lemma}

\theoremstyle{definition}

\newtheorem*{definition*}{Definition}

\theoremstyle{remark}

\makeatletter

\@addtoreset{equation}{section}  
\makeatother

\newcommand{\abs}[1]{\left\vert #1\right\vert}      
\newcommand{\nr}[1]{\left\Vert #1\right\Vert}          
\newcommand{\innp}[2]{\left \langle #1 , #2 \right \rangle}           

\newcommand{\set}[1]{\left\{ #1 \right\}}		

\renewcommand{\leq}{\leqslant}	\renewcommand{\geq}{\geqslant}
\newcommand{\inv}{^{-1}}

\newcommand{\1}{\mathds 1}       
\renewcommand{\Re}{\mathsf{Re}}        
\renewcommand{\Im}{\mathsf{Im}}
\newcommand {\limt}[2]{\xrightarrow[#1 \to #2]{}}

\newcommand{\Dom}{\mathsf{Dom}}
\newcommand{\Sp}{\s}

\newcommand{\loc}{{\mathsf{loc}}}

\newcommand{\R}{\mathbb{R}}		\newcommand{\C}{\mathbb{C}}
\newcommand{\N}{\mathbb{N}}

\newcommand{\Lc}{{\mathcal L}}

\renewcommand{\a}{\alpha}\renewcommand{\b}{\beta}\newcommand{\G}{\Gamma}\renewcommand{\d}{\delta}\newcommand{\e}{\varepsilon}\renewcommand{\th}{\theta}\renewcommand{\k}{\kappa}\newcommand{\s}{\sigma}\newcommand{\f}{\varphi}\renewcommand{\O}{\Omega}

\newcommand{\HH}{\mathscr H}
\newcommand{\Wc}{\mathcal W}
\newcommand{\Rc}{\mathcal R}

\newcommand{\Nc}{\mathcal N}

\newcommand{\stepp}{\noindent {\bf $\bullet$}\quad }

\usepackage{mathrsfs}

\newcommand{\detail}[1]
{
}

\newenvironment{psmallmatrix}
  {\left(\begin{smallmatrix}}
  {\end{smallmatrix}\right)}
\begingroup
    \makeatletter
    \@for\theoremstyle:=definition,remark,plain\do{%
        \expandafter\g@addto@macro\csname th@\theoremstyle\endcsname{%
            \addtolength\thm@preskip\parskip
            }%
        }
\endgroup
%

%
\usepackage{color}
\usepackage[normalem]{ulem}
\definecolor{DarkBlue}{rgb}{0,0.1,0.7}  
\definecolor{DarkGreen}{rgb}{0,0.5,0.1}

\newcommand\soutD{\bgroup\markoverwith
{\textcolor{DarkGreen}{\rule[.5ex]{2pt}{1pt}}}\ULon}
\newcommand\soutJ{\bgroup\markoverwith
{\textcolor{DarkBlue}{\rule[.5ex]{2pt}{1pt}}}\ULon}
\setlength{\marginparwidth}{2.8cm}
\newcommand{\Hm}[1]{\leavevmode{\marginpar{\tiny%
$\hbox to 0mm{\hspace*{-0.5mm}$\leftarrow$\hss}%
\vcenter{\vrule depth 0.1mm height 0.1mm width \the\marginparwidth}%
\hbox to
0mm{\hss$\rightarrow$\hspace*{-0.5mm}}$\\\relax\raggedright #1}}}

\title[Wave equation with Dirac damping]{Spectrum of the wave equation with Dirac damping on a non-compact star graph}
\author{David Krej\v{c}i\v{r}\'{i}k}
\address[D.~Krej\v{c}i\v{r}\'{i}k]{Department of Mathematics, Faculty of Nuclear Sciences and Physical Engineering, Czech Technical University in Prague, Trojanova 13, 120 00, Prague, Czech Republic}
\email{david.krejcirik@fjfi.cvut.cz}
\author{Julien Royer}
\address[J.~Royer]{Institut de math\'ematiques de Toulouse, Universit\'e Toulouse 3, 
118 route de Narbonne, F-31062 Toulouse cedex 9, France}
\email{julien.royer@math.univ-toulouse.fr}
\date{26 April 2022}

\begin{document}

\begin{abstract}
We consider the wave equation on non-compact star graphs,
subject to a distributional damping 
defined through a Robin-type vertex condition 
with complex coupling.
It is shown that the non-self-adjoint generator of the evolution problem
admits an abrupt change in its spectral properties 
for a special coupling related to the number of graph edges.
As an application, we show that the evolution problem
is highly unstable for the critical couplings.
The relationship with the Dirac equation in non-relativistic 
quantum mechanics is also mentioned.
\end{abstract}

\maketitle
 
\section{Introduction}
%
The primary motivation of this paper is 
the one-dimensional wave equation
\begin{equation}\label{wave}
  \psi_{tt} - \alpha \delta \psi_t - \psi_{xx} = 0 \,,
\end{equation}
where $t>0$ and $x \in \R$
are the time and space variables, respectively, 
$\delta$ is the Dirac delta function 
and~$\alpha$ is a complex number.  
The distribution $- \alpha \delta$ models a highly localised damping:
dissipation or supply of energy if~$\alpha$ is negative or positive, 
respectively, while purely imaginary~$\alpha$ admits 
a conservative, quantum-mechanical interpretation.  

In the case of 
$\alpha$ real and the space variable~$x$ restricted to a \emph{bounded} interval,
say $(-\frac{\pi}{2},\frac{\pi}{2})$,
the model~\eqref{wave} was introduced
in~\cite{Bamberger-Rauch-Taylor_1982} 
in order to explain the playings of harmonics on stringed instruments.
By a detailed spectral analysis, 
the authors argue that the optimal damping is $\alpha=-2$.
The problem has been more recently analysed in
\cite{Ammari-Henrot-Tucsnak_2000,Ammari-Henrot-Tucsnak_2001,Cox-Henrot_2008}
(see also \cite[Sec.~4.1.1]{Ammari-Nicaise}).
Qualitatively, spectral properties of~\eqref{wave} in the bounded case
are the expected ones: 
the spectrum is composed of isolated eigenvalues
of finite algebraic multiplicities.
However, there is an abrupt change in basis properties 
depending on whether $|\alpha| \not = 2$ or $|\alpha| = 2$.
While the root vectors form a Riesz basis in the former case,
they are not even complete for the special values $\alpha=\pm 2$.

The objective of the present paper is to point out 
that the peculiar spectral transition is even more drastic in the \emph{unbounded} situation considered here. Moreover, we allow for~$\alpha$ being an arbitrary complex number). If $\alpha \in \C \setminus \{\pm 2\}$,
the spectrum coincides with the imaginary axis~$i\R$, 
as in the damped-free case $\alpha=0$.
However, the spectrum abruptly fills in the whole complex half-plane
$
  \set{z \in \C \, : \, \pm \Re(z) \geq 0}
$  
as long as $\alpha = \pm 2$. 
This phenomenon was previously announced in \cite[Rem.~1]{KK8},
but no rigorous analysis was provided.
Wild spectral properties for the damped wave equation
with different unbounded dampings have been recently observed in
\cite{Freitas-Siegl-Tretter_2018,Freitas-Hefti-Siegl_2020}.

What is more, we give an insight into the appearance 
of the ``magical'' values~$\pm 2$ by considering~\eqref{wave}
in the more general situation of non-compact star graphs
with $N \geq 1$ edges. The real line~$\R$ can be considered as the star graph with two edges, 
while the case $N= 1$ corresponds to the wave equation 
on the half-line with damping at the boundary.  
It turns out that the abrupt change in spectral properties 
happens precisely for $\alpha = \pm N$.

The Laplacian on metric graphs with non-self-adjoint
coupling conditions at the vertices has been recently analysed in
\cite{HKS,Royer-Riviere_2020,Hussein_2021}. 
The damped wave equation on metric graphs 
is considered in \cite{AbdallahMerNic13,Freitas-Lipovsky_2017},
however, just self-adjoint coupling conditions at the vertices 
(and finite edges) are considered. See also~\cite{AsselJeKh16} for 
an unbounded network with Dirichlet and Kirchhoff conditions.
The present paper therefore opens a new direction 
of mathematically interesting and physically relevant research.
 
The organisation of this paper is as follows. 
In Section~\ref{Sec.results} we introduce our general model
and formulate the main results.
The preliminary Section~\ref{Sec.general} collects 
basic properties of the wave operator. 
In Section~\ref{Sec.limit} we show that the Dirac damping
can be realised as a limit of properly scaled regular dampings,
in a norm-resolvent sense.
The proofs of more involved spectral and evolution results 
are given in Sections~\ref{Sec.proofs} and~\ref{Sec.evolution}, respectively.
In the concluding Section~\ref{Sec.Dirac},
we provide a relationship between the damped wave equation
and the Dirac equation in relativistic quantum mechanics,
in order to motivate the present setting of unbounded geometries
and non-real dampings.

\section{Our model and main results}\label{Sec.results}
%

\subsection{The damped wave equation and the wave operator}
Let $N \in \N^* =\{1,2,\dots\}$. 
We set $\Nc = \{1,\dots,N\}$. We consider a metric graph $\G$ given by $N$ edges of infinite length linked together at a central vertex. Concretely, $\G$ consists of $N$ copies of the open half-line $\R_+^* = ]0,+\infty[$ and the link between the edges will be encoded in the domain of our operator (see \eqref{Dom-Wc} below).

We set 
\[
L^2(\G) = L^2(\R_+^*)^N.
\]
It is endowed with its natural Hilbert structure. For $k \in \N^*$ we similarly define
\[
\dot H^k (\G^*) =  \dot H^k(\R_+^*) ^N , \quad H^k (\G^*) =  H^k(\R_+^*) ^N,
\]
and for $u = (u_j)_{j \in \Nc} \in \dot H^k(\G^*)$ we write $u^{(k)}$ for $(u_j^{(k)})_{j \in \Nc}$. In the sequel, when we consider a function $u$ on $\G \simeq (\R_+^*)^N$ it will be implicitly understood that we denote by $(u_j)_{j \in \Nc}$ its components.

We say that $u \in \dot H^1(\G^*)$ is \emph{continuous} at~$0$ if 
\[
\forall j,k \in \Nc, \quad u_j(0) = u_k(0).
\]
In this case we denote by $u(0)$ the common value of $u_j(0)$, $j \in \Nc$. We set 
\[
\dot H^1(\G) = \set{ u \in \dot H^1(\G^*) \, : \, \text{$u$ is continuous at 0}}.
\]
To define the natural Hilbert structure, we have to take the quotient of $\dot H^1(\G)$ by constant functions. We denote by $\tilde H^1(\G)$ this quotient. Then $\tilde H^1(\G)$ is a Hilbert space for the inner product defined by 
\[
\forall u,v \in \tilde H^1(\G), \quad \innp{u}{v}_{\tilde H^1(\G)} = \innp{u'}{v'}_{L^2(\G)} =  \sum_{j= 1}^N \innp{u_j'}{v_j'}_{L^2(\R_+^*)}.
\]
Throughout the paper we will identify a function in $\dot H^1(\G)$ and its equivalence class in $\tilde H^1(\G)$. This implicitly means that the results should be invariant by addition of a constant to the functions in $\dot H^1(\G)$.

We consider the damped wave equation on $\G$ with damping at the central vertex. 
Let~$I$ be an interval of $\R$ which contains 0. We introduce the problem  
\begin{equation} \label{eq:wave}
\partial_{tt} u_j(t,x) - \partial_{xx} u_j(t,x) = 0, \quad  \forall j \in \Nc, \forall t \in I, \forall x \geq  0
\end{equation}
with the continuity at the vertex
\begin{equation} \label{eq:wave-continuite}
u_j(t,0) = u_k(t,0),  \quad \forall j,k \in \Nc, \forall t \in I
\end{equation}
(this also implies the continuity at the vertex for $\partial_t u$) 
and the damping condition
\begin{equation} \label{eq:wave-Robin}
\sum_{j=1}^N \partial_x u_j(t,0) + \a \partial_t u(t,0) = 0, \quad  \forall t \in I,
\end{equation}
where $\a \in \C$. The Cauchy problem is completed by the initial conditions
\begin{equation} \label{eq:wave-CI}
(u_j,\partial_t u_j)|_{t = 0} = (f_j,g_j), \quad  \forall j \in \Nc,
\end{equation}
where $f \in \dot H^1(\G)$ and $g \in L^2(\G)$. 

More precisely, a (strong) solution of \eqref{eq:wave}--\eqref{eq:wave-CI} is a continuous function $u : I \to \tilde H^1(\G) \cap \dot H^2(\G^*)$ with continuous derivative $\partial_t u : I \to H^1(\G)$ and continuous second derivative $\partial_{tt} u : I \to L^2(\G)$, such that \eqref{eq:wave} holds in $L^2(\G)$ for all $t \in I$, with \eqref{eq:wave-continuite}--\eqref{eq:wave-Robin} for all $t \in I$, and with \eqref{eq:wave-CI} in $\tilde H^1(\G) \times L^2(\G)$.

We set $\HH = \tilde  H^1(\G) \times L^2(\G)$ and
\begin{equation} \label{Dom-Wc}
\Dom(\Wc_\a) = \set{U = (u,v) \in \big( \tilde H^1(\G) \cap \dot H^2(\G^*) \big) \times H^1(\G) \, : \, \sum_{j=1}^N u_j'(0) + \a v(0) = 0}.
\end{equation}
Then we define the unbounded operator $\Wc_\a$ in the Hilbert space $\HH$ by
\begin{equation} \label{def:Wc}
\forall U = (u,v) \in \Dom(\Wc_\a), \quad \Wc_\a \begin{pmatrix} u \\ v \end{pmatrix} = 
\begin{pmatrix} 
v \\
u''
\end{pmatrix}.
\end{equation}
We endow $\Dom(\Wc_\a)$ with the graph norm:
\[
\nr{U}_{\Dom(\Wc_\a)}^2 = \nr{\Wc_\a U}_\HH^2 + \nr{U}_{\HH}^2, \quad \forall U \in \Dom(\Wc_\a). 
\]
Then we see that $u$ is a solution of \eqref{eq:wave}--\eqref{eq:wave-CI} on $I$ if and only if $U = (u,\partial_t u)$ belongs to $C^1(\R_+,\HH) \cap C^0(\R_+,\Dom(\Wc))$ and satisfies
\begin{equation} \label{eq:wave-Wc}
\begin{cases}
U'(t) = \Wc U(t), \quad \forall t \in I,\\
U(0) = (f,g).
\end{cases}
\end{equation}

Our purpose in this paper is to describe the spectrum of the operator $\Wc_\a$ and the behaviour of the evolution problem \eqref{eq:wave}--\eqref{eq:wave-CI} (or equivalently \eqref{eq:wave-Wc}). The first step is to check that $\Wc_\a$ is closed with non-empty resolvent set. In Section \ref{Sec.general} we prove the following more precise result.
\begin{theorem} \label{th:Wa-max-diss}
The operator $\Wc_\a$ is maximal accretive (respectively maximal dissipative, respectively skew-adjoint) if $\Re(\a) \geq 0$ (respectively $\Re(\a) \leq 0$, respectively $\Re(\a) = 0$).
\end{theorem}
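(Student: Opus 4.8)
The plan is to prove directly that $\Wc_\a$ is maximal dissipative when $\Re(\a)\leq 0$, via a quadratic-form computation at the vertex together with an explicit construction of the resolvent, and to obtain the other two statements from symmetries of the operator. Two reductions come first. Density of $\Dom(\Wc_\a)$ in $\HH$ is routine: pairs $(u,v)$ whose components are smooth and compactly supported away from the central vertex already belong to $\Dom(\Wc_\a)$ (then $v(0)=0$ and $u_j'(0)=0$ for every $j$), and such pairs are dense, since compactly supported functions are dense in $\tilde H^1(\G)$ once one notices that the mean $w\mapsto\int w$ is not a bounded functional on $L^2(\R_+^*)$. Next, the map $J(u,v)=(u,-v)$ is unitary on $\HH$ and satisfies $J\,\Dom(\Wc_\a)=\Dom(\Wc_{-\a})$ and $J\Wc_\a J^{-1}=-\Wc_{-\a}$; hence $\Wc_\a$ is maximal accretive if and only if $\Wc_{-\a}$ is maximal dissipative, and when $\Wc_\a$ is simultaneously maximal accretive and maximal dissipative it generates a $C_0$-group of isometries, necessarily unitary, so it is skew-adjoint by Stone's theorem. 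It therefore suffices to treat maximal dissipativity for $\Re(\a)\leq 0$.

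For dissipativity, let $U=(u,v)\in\Dom(\Wc_\a)$. On each edge $u_j'\in H^1(\R_+^*)$ — because $u\in\tilde H^1(\G)$ forces $u_j'\in L^2$ and $u\in\dot H^2(\G^*)$ forces $u_j''\in L^2$ — and $v_j\in H^1(\R_+^*)$, so $u_j'$ and $v_j$ both vanish at infinity and an integration by parts is justified. Using the continuity of $v$ at the vertex and the Robin condition $\sum_{j}u_j'(0)=-\a\,v(0)$, this yields
\[
\innp{\Wc_\a U}{U}_\HH=\innp{v'}{u'}_{L^2(\G)}-\innp{u'}{v'}_{L^2(\G)}+\a\,\abs{v(0)}^2 ;
\]
since the first difference is purely imaginary, we obtain the key identity
\[
\Re\innp{\Wc_\a U}{U}_\HH=\Re(\a)\,\abs{v(0)}^2 .
\]
Hence $\Wc_\a$ is dissipative as soon as $\Re(\a)\leq 0$, and $\innp{\Wc_\a U}{U}_\HH\in i\R$ when $\Re(\a)=0$. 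In particular $\Wc_\a-\lambda$ is injective for every $\lambda>0$, since $(\Wc_\a-\lambda)U=0$ would give $\lambda\,\nr{U}_\HH^2=\Re\innp{\Wc_\a U}{U}_\HH\leq 0$.

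By the Lumer--Phillips theorem it remains to prove that $\Ran(\Wc_\a-\lambda)=\HH$ for a single $\lambda>0$. Given $(F,G)\in\HH$, the equation $(\Wc_\a-\lambda)(u,v)=(F,G)$ reduces, after using the freedom in the representative of $F$, to $v=\lambda u+F$ together with the edgewise equations $u_j''-\lambda^2 u_j=G_j+\lambda F_j$, subject to continuity of $u$ at the vertex, to the Robin condition, and to $v\in H^1(\G)$, which prescribes the behaviour at infinity. On each half-line I would solve by variation of parameters in the basis $\{e^{\lambda x},e^{-\lambda x}\}$, discarding the growing exponential; the solution then depends on a single constant $B_j$ (the coefficient of $e^{-\lambda x}$) per edge. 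The $N-1$ continuity conditions and the single Robin condition give an $N\times N$ linear system for $(B_1,\dots,B_N)$ whose matrix does not depend on $(F,G)$, and its invertibility is equivalent to $\ker(\Wc_\a-\lambda)=\{0\}$, which was just established; concretely, the only coupling for which the matrix degenerates is $\a=N$, and this is excluded here because $N\geq 1$ while $\Re(\a)\leq 0$. Solving the system produces $(u,v)\in\Dom(\Wc_\a)$ with $(\Wc_\a-\lambda)(u,v)=(F,G)$, which gives maximal dissipativity; the accretive and skew-adjoint statements then follow from the reductions above.

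The step I expect to be the main obstacle is this resolvent construction, precisely because of the homogeneous Sobolev space $\tilde H^1(\G)$ and its quotient by constants: a representative of $F$ need not lie in $L^2$, so the particular solution on an edge may itself grow and the requirement $v\in L^2(\G)$ has to be read as a cancellation of the leading terms at infinity rather than as plain decay. Once the function spaces are handled carefully, the remaining computations — the integration by parts, the matching conditions and the invertibility of the $N\times N$ matrix — are routine.
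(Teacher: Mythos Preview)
Your proposal is correct, and the dissipativity computation via the vertex identity $\Re\innp{\Wc_\a U}{U}_\HH=\Re(\a)\abs{v(0)}^2$ is exactly the paper's \eqref{eq:fquad}. The route to maximality and skew-adjointness, however, differs. For maximality the paper does \emph{not} solve the ODE explicitly on each edge; instead it introduces the sesquilinear form $Q_\a(z)=-\partial^2-\a z\,\d+z^2$ on $H^1(\G)$, shows it is coercive by Lax--Milgram for $\a\in\overline{\C_\pm}$ and $z\in\C_\mp$, and then assembles the wave resolvent $(\Wc_\a-z)^{-1}$ from $Q_\a(z)^{-1}$, first on the dense subspace $H^1(\G)\times L^2(\G)$ and then by continuity on all of $\HH$. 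This neatly sidesteps the very $\tilde H^1$ obstacle you flag: Lax--Milgram lives on the inhomogeneous space $H^1(\G)$, so one never has to convolve against a representative that may grow. Your explicit variation-of-parameters construction also works, but carrying it out rigorously amounts to the content of the paper's Lemma~\ref{lem:conv-dotH1} (well-posedness and mapping properties of the convolution $\rho_z*h$ for $h\in\dot H^1$), which the paper develops only later, in Section~\ref{Sec.proofs}, precisely for the half-plane where Lax--Milgram is unavailable. For skew-adjointness the paper computes the adjoint directly, obtaining $\Wc_\a^*=-\Wc_{-\bar\a}$ (Proposition~\ref{prop:adjoint}); this is more work than your Stone-theorem shortcut but yields a formula reused elsewhere in the spectral analysis. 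Finally, your unitary conjugation $J(u,v)=(u,-v)$ giving $J\Wc_\a J^{-1}=-\Wc_{-\a}$ is a clean reduction that the paper does not isolate; the paper simply treats both signs simultaneously via the same form identity.
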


An alternative approach, based on the dominant Schur complement, 
to introduce the wave operator with possibly highly 
irregular dampings (possibly distributions)
has been recently developed by Gerhat~\cite{Gerhat}.

\subsection{The Robin vertex condition}

The definition of $\Dom(\Wc_\a)$ contains continuity at 0 for $u$ and $v$, and the additional condition will be referred to as the Robin condition. 

Notice that for $N = 1$, the graph $\G$ reduces to the half-line $]0,+\infty[$ and in this case we recover the usual Robin condition $u'(0) + \a v(0) =0$ at the boundary. When $N = 2$ we can identify the two edges with $]0,+\infty[$ and $]-\infty,0[$ 
(by considering the transformation $x \mapsto u_2(-x)$ on $]-\infty,0[$), 
so $\G$ is identified with $\R$ and in this setting the Robin condition yields the usual jump condition
$
u'(0^+) - u'(0^-) = -\a v(0).
$

Notice also that the two extreme situations $\alpha=0$ and $\alpha=\infty$
correspond, respectively, to Neumann (or Kirchhoff)
and Dirichlet boundary conditions 
imposed at the central vertex.

The motivation for considering the vertex condition \eqref{eq:wave-Robin} for our wave equation is that it corresponds to a singular damping 
localised \emph{at} the vertex. Formally, \eqref{eq:wave} and \eqref{eq:wave-Robin} mean
\[
\partial_{tt} u - \partial_{xx} u - \a \d \partial_t u = 0,
\]
where $\d$ is a generalisation of the Dirac distribution on $\G$. 
Similarly, for the corresponding wave operator we can formally write 
\[
\Wc_\a \begin{pmatrix} u \\ v \end{pmatrix} = \begin{pmatrix} v \\ u'' + \a \d \end{pmatrix}.
\]

This interpretation is supported by writing the quadratic form associated to the operator $\Wc_\a$ defined by \eqref{Dom-Wc}--\eqref{def:Wc}. Indeed, for $U = (u,v) \in \Dom(\Wc_\a)$ we have 
\begin{equation} \label{eq:fquad}
\langle \Wc_\a U, U \rangle_\HH = \langle v',u'\rangle_{L^2(\G)} + \langle u'' , v \rangle_{L^2(\G)} = 2i\Im \langle v',u' \rangle_{L^2(\G)} + \a \abs{v(0)}^2.
\end{equation}

To go further, we compare our operator $\Wc_\a$ 
with an operator with a damping 
on the edges localised \emph{near} the vertex.

For $j \in \Nc$ we consider $\rho_j \in L^1(\R_+^*,\R_+)$ such that $\int_0^{+\infty} \sqrt x \rho_j(x) \, dx < +\infty$. We assume that 
$
\sum_{j=1}^N \int_0^{+\infty} \rho_j(x) \, dx = 1.
$
For $n \in \N^*$ and $j \in \Nc$ we set $\rho_{j}^n(x) = n \rho_j(xn)$. Then we set $\rho^n = (\rho_j^n)_{1\leq j \leq N}$.

For $n \in \N^*$ we consider on $\HH$ the operator 
\[
\Wc_{\a,n} = 
\begin{pmatrix}
0 & 1 \\
\partial^2 & \a \rho^n
\end{pmatrix},
\]
defined on the domain
\[
\Dom(\Wc_{\a,n}) = \set{U = (u,v) \in \big(\tilde H^1(\G) \cap \dot H^2(\G^*) \big) \times H^1(\G) \, : \, \sum_{j=1}^N u_j'(0) = 0}.
\]
A result similar to Theorem \ref{th:Wa-max-diss} holds for $\Wc_{\a,n}$ (see Proposition \ref{prop:Wcan-diss} below). In particular, setting
\[
\C_\pm = \set{z \in \C \, : \, \pm \Re(z) > 0},
\]
we deduce that if $\a \in \overline{\C_\pm}$ then for $z \in \C_\mp$ we have $z \in \rho(\Wc_\a)$. 
In Section \ref{Sec.limit}
we show that $\Wc_\a$ is the limit of $\Wc_{\a,n}$ in the sense of the norm of the resolvent.

\begin{theorem} \label{th:conv-norm-res}
Let $\a \in \overline{\C_\pm}$ and $z \in \C_\mp$. Then 
\[
\nr{(W_\a-z)\inv - (W_{\a,n}-z)\inv }_{\Lc(\HH)} \limt n {+\infty} 0.
\]
\end{theorem}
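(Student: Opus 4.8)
The plan is to reduce the resolvent difference to a rank-controlled, explicitly solvable object by exploiting the one-dimensional nature of the perturbation. Fix $\a \in \overline{\C_\pm}$ and $z \in \C_\mp$, so that by Theorem~\ref{th:Wa-max-diss} (and its analogue Proposition~\ref{prop:Wcan-diss}) both $(\Wc_\a - z)\inv$ and $(\Wc_{\a,n} - z)\inv$ exist and are bounded uniformly in $n$. Given $F = (f,g) \in \HH$, set $U_n = (u_n,v_n) = (\Wc_{\a,n} - z)\inv F$ and $U = (u,v) = (\Wc_\a - z)\inv F$. Writing out the two resolvent equations componentwise, one gets $v = z u + f$, $v_n = z u_n + f$, together with second-order ODEs $u'' = z v - g + (\text{boundary/bulk damping terms})$ on each edge; eliminating $v$ shows that on each half-line $u$ and $u_n$ solve $u_j'' - z^2 u_j = h_j$ for an explicit $h_j \in L^2(\R_+^*)$ (depending on $f,g$ but \emph{not} on $n$ in the case of $\Wc_\a$, and with an extra term $\a \rho_j^n v_n$ in the case of $\Wc_{\a,n}$). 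Since $\Re(z)\neq 0$, the operator $-\partial_x^2 + z^2$ on $\R_+^*$ is boundedly invertible with an explicit exponentially decaying Green kernel, and the only freedom is a single constant per edge fixed by continuity at $0$ plus the one scalar Robin-type condition. Thus each of $U$ and $U_n$ is determined by solving a $1\times 1$ linear system for the boundary value $u(0)$ (resp.\ $u_n(0)$), whose coefficient involves $\a$, $z$, and — in the regularised case — the scalar $\int_0^\infty \rho_j^n\, e^{-z x}\,dx$-type quantities.

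**Key steps.**
First, I would derive the closed-form expression for $U = (\Wc_\a - z)\inv F$: solve the inhomogeneous Helmholtz equation on each edge with the decaying fundamental solution, impose continuity, and use \eqref{Dom-Wc} to pin down $u(0)$; record that the resulting map $F \mapsto U$ is given by a fixed Green operator plus a rank-one-per-edge boundary correction with a coefficient bounded away from $0$ (here one uses $\a \in \overline{\C_\pm}$, $z \in \C_\mp$ to guarantee the denominator does not vanish — this is exactly the computation underlying the resolvent bound). Second, I would do the analogous computation for $\Wc_{\a,n}$: now the bulk term $\a \rho^n v_n$ appears, so $U_n$ is the solution of a fixed-point equation $U_n = (\text{Green part}) + (\text{operator involving }\rho^n) U_n$; since $\|\rho_j^n\|_{L^1} = \|\rho_j\|_{L^1}$ is $n$-independent but $\rho_j^n$ concentrates at $0$, this operator converges strongly (indeed in a suitable weighted sense) to the corresponding boundary term, by a standard approximate-identity argument using the moment condition $\int \sqrt{x}\,\rho_j(x)\,dx < \infty$ together with the local $H^1 \hookrightarrow C^{0}$ control giving $v_n(x) \to v_n(0)$ near $0$. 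Third, subtract the two representations: the Green parts cancel, and the difference of the boundary/bulk corrections is estimated by the convergence of the scalar coefficients, yielding a bound of the form $\|U_n - U\|_\HH \le \e_n \|F\|_\HH$ with $\e_n \to 0$, where $\e_n$ quantifies how far $\int_0^\infty \rho_j^n(x) e^{-z x}\,dx$ and related moments are from their limits $\int_0^\infty \rho_j = $ the appropriate constant. Uniformity in $F$ then gives the operator-norm statement.

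**Main obstacle.**
The delicate point is not the algebra but controlling the regularised damping term $\a \rho^n v_n$ uniformly: one must show that $\int_0^\infty \rho_j^n(x)\,w(x)\,dx \to w(0)\int_0^\infty \rho_j$ for $w$ ranging over the relevant trace/Green functions with a rate independent of which such $w$ appears, and — more subtly — that in the fixed-point equation defining $U_n$ the operator norm of the $\rho^n$-term stays $\le 1-\kappa$ for some $\kappa>0$, so that $(\Id - \rho^n\text{-term})\inv$ is uniformly bounded. This is where the hypothesis $\int_0^\infty \sqrt{x}\,\rho_j(x)\,dx < \infty$ is used: it converts the $H^1$-modulus of continuity of $v_n$ near $0$ (which is only $\sqrt{x}$-Hölder in general) into an integrable bound against $\rho_j^n$, with the scaling $\rho_j^n(x) = n\rho_j(nx)$ producing a factor $n^{-1/2}\int\sqrt{x}\,\rho_j \to 0$. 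Once this quantitative approximate-identity estimate is in hand, the rest is bookkeeping: assemble the per-edge estimates, sum over $j \in \Nc$, and conclude $\nr{(\Wc_\a - z)\inv - (\Wc_{\a,n}-z)\inv}_{\Lc(\HH)} \to 0$.
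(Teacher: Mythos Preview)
Your strategy is sound and would yield the result, but it takes a more laborious route than the paper, and the ``main obstacle'' you flag is not actually an obstacle.

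The paper proceeds as follows. Having already expressed both resolvents through the common structural formula \eqref{eq:Rca} (Proposition~\ref{prop:res-Wca}), the difference $(\Wc_\a-z)\inv - (\Wc_{\a,n}-z)\inv$ is controlled, up to fixed bounded factors, by $\nr{R_\a(z)-R_{\a,n}(z)}_{\Lc(H\inv(\G),H^1(\G))}$. At the level of the quadratic pencils $Q_\a(z)$ and $Q_{\a,n}(z)$ one has a \emph{bona fide} resolvent identity
\[
R_\a(z) - R_{\a,n}(z) = \a z\, R_\a(z)\,(\d - \rho^n)\, R_{\a,n}(z),
\]
which makes sense because $\d-\rho^n \in \Lc(H^1(\G),H\inv(\G))$. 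Then one only needs two ingredients: the uniform bound on $R_{\a,n}(z)$ (Lax--Milgram, Proposition~\ref{prop:Raz}) and the approximate-identity estimate $\nr{\rho^n-\d}_{\Lc(H^1,H\inv)}\to 0$, whose proof is exactly the $\sqrt x$-moment computation you identified. No fixed-point argument, no per-edge Green-function bookkeeping.

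By contrast, your plan works directly with the ODE solutions and a fixed-point formulation for $U_n$. Two remarks. First, the contraction bound you worry about is unnecessary: you already have $\nr{U_n}_\HH \le \abs{\Re(z)}\inv \nr{F}_\HH$ uniformly in $n$ from maximal dissipativity (Proposition~\ref{prop:Wcan-diss}), so $(\Id-\rho^n\text{-term})\inv$ need not be produced by a Neumann series. Second, your claim that ``the Green parts cancel'' when subtracting is imprecise: $u$ satisfies a Robin condition at the vertex while $u_n$ satisfies Kirchhoff, so the homogeneous-solution coefficients differ and must be compared via the scalar vertex equations --- doable, but this is exactly the computation that the paper absorbs into the single resolvent identity above. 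Passing to the pencil level is what lets the paper sidestep the domain mismatch between $\Wc_\a$ and $\Wc_{\a,n}$ that would prevent writing a resolvent identity directly for the wave operators.

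In short: your approach is correct and your identification of the key $\sqrt x$-moment estimate is on target, but the paper's detour through the quadratic pencil converts a hands-on comparison of explicit solutions into a two-line resolvent identity.
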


\subsection{Spectral properties of the wave operator}

Having shown that~$\Wc_\a$ is a well defined operator and a suitable model for the damped wave equation with damping at the central vertex,
we can now turn to its spectral properties.

We first notice that, as a consequence of Theorem \ref{th:Wa-max-diss}, if $\a \in \overline{\C_\pm}$ then $\C_\mp \subset \rho(\Wc_\a)$ and for $z \in \C_\mp$ we have 
\[
\nr{(\Wc_\a -z)\inv}_{\Lc(\HH)} \leq \frac 1 {\mp \Re(z)}.
\]

The main result of this paper is related to spectral properties on the other half-plane. There is no general theory for resolvent estimates inside the numerical range, but explicit computations provide a precise description of the spectrum and the resolvent for this particular problem.

\begin{theorem} \label{th:spectrum}
The spectrum of $\Wc_\a$ is 
\[
\begin{cases}
i\R & \text{if } \a \in \C \setminus \{\pm N\},\\
\overline{\C_+} & \text{if } \a = N,\\
\overline{\C_-} & \text{if } \a = -N.
\end{cases}
\]
Moreover, 
\begin{enumerate} [\rm(i)]
\item $i\R$ contains no eigenvalue nor residual spectrum of $\Wc_\a$,
\item if $\a = \pm N$ then any $z \in \C_\pm$ is an eigenvalue of $\Wc_\a$ of geometric multiplicity 1 and infinite algebraic multiplicity,
\item there exist $c_0, C > 0$ such that for $\a  \in \C_\pm \setminus \set{\pm N}$ and $z \in \C_\pm$ we have
\[
\max \left( \frac 1 {\abs {\Re(z)}}, \frac {c_0} {\abs z \abs {\a \mp N}} \right) \leq \nr{(\Wc_\a - z)\inv}_{\Lc(\HH)} \leq \frac {C}{\abs{\Re(z)}} \left( 1 + \frac 1{ \abs{\a \mp N}} \right).
\]
\end{enumerate}
\end{theorem}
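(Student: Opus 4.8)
The plan is to reduce everything to an explicit computation on each edge, exploiting the fact that the wave equation on the half-line is solved by $e^{\pm zx}$. The starting point is the resolvent equation $(\Wc_\a - z)U = F$ for $U = (u,v)$, $F = (f,g) \in \HH$: the first component gives $v = zu - f$, and substituting into the second gives $u'' - z^2 u = -g - zf =: h$ on each edge, together with the continuity of $u$ at $0$ and the Robin condition $\sum_j u_j'(0) + \a(zu(0) - f(0)) = 0$. For $z \in \C_+$ (the interesting half-plane; the case $z\in\C_-$ is covered by Theorem~\ref{th:Wa-max-diss}), each component $u_j$ must lie in $\dot H^2(\R_+^*)$, so the $e^{zx}$ mode is excluded and $u_j$ is the sum of $e^{-zx}$ times a constant $a_j$ and a particular solution built from $h_j$ via the decaying Green kernel $\frac{1}{2z}e^{-z|x-y|}$ (appropriately normalised so the result is in $\dot H^2$). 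Continuity forces all $a_j$ equal to a single constant $a$, and plugging into the Robin condition yields a single scalar equation for $a$ whose coefficient is proportional to $(\a - N)$ (when $z\in\C_+$; it would be $(\a+N)$ for $z\in\C_-$).

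From this one reads off the whole theorem. \textbf{Spectrum.} If $\a \neq N$ the scalar equation is uniquely solvable for every $z \in \C_+$ and every right-hand side, so $\C_+ \subset \rho(\Wc_\a)$; combined with $\C_- \subset \rho(\Wc_\a)$ and the fact (to be checked, part~(i)) that $i\R$ contains no spectrum, we get $\s(\Wc_\a) = i\R$. If $\a = N$, the coefficient of $a$ vanishes identically on $\C_+$, so for $F = 0$ the function $u_j(x) = e^{-zx}$, $v = zu$ gives a nonzero element of $\ker(\Wc_\a - z)$: hence every $z \in \C_+$ is an eigenvalue and $\C_+ \subset \s(\Wc_\a)$; since the spectrum is closed and $\C_- \subset \rho$, we get $\s(\Wc_\a) = \overline{\C_+}$. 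The case $\a = -N$ is symmetric. \textbf{(ii) Multiplicities.} Geometric multiplicity $1$ follows because the eigenvalue equation $(\Wc_\a - z)U = 0$ forces $v = zu$ and $u_j = a_j e^{-zx}$ with all $a_j$ equal by continuity, a one-dimensional space. For infinite algebraic multiplicity when $\a = \pm N$, I would build a Jordan chain: solve $(\Wc_\a - z)U_{k+1} = U_k$ recursively, which at each step is the resolvent-type equation above with $\a = \pm N$, hence solvable (the obstruction coefficient being zero) — the particular solution introduces polynomial-in-$x$ prefactors $x^k e^{-zx}$, all of which remain in $\dot H^2$, so the chain never terminates. \textbf{(i) No point/residual spectrum on $i\R$.} For $z = i\tau$, $\tau \in \R\setminus\{0\}$, the homogeneous equation again gives $u_j = a_j e^{-i\tau x}$, but $e^{-i\tau x} \notin L^2(\R_+)$ forces $a_j = 0$ (for $\tau = 0$ one uses $u' \in L^2$ and the $\dot H^1$-quotient normalisation), so no eigenvalue; absence of residual spectrum then follows from $\Wc_\a^* = \Wc_{\bar\a}$ (or $-\Wc_{\bar\a}$ up to the skew-adjoint normalisation — computable from the boundary form) having no eigenvalue at $\overline{z} = -i\tau$ by the same argument.

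\textbf{(iii) Resolvent bounds.} The upper bound comes from writing $(\Wc_\a - z)^{-1}F$ explicitly: $u = u_{\mathrm{part}} + a\, e^{-z\cdot}$ where $u_{\mathrm{part}}$ is the Green-kernel convolution and $a = (\text{linear functional of } F)/(c\,z(\a - N))$ for an explicit constant $c$. One estimates $\nr{u_{\mathrm{part}}'}_{L^2}$ and the functional by $\nr F_\HH$ uniformly for $z \in \C_+$ (the $\frac{1}{\abs{\Re z}}$ factor arising exactly as in the standard half-line resolvent bound $\nr{(\Wc_0-z)^{-1}}\le 1/\abs{\Re z}$ for $z\in\overline{\C_+}$), and the correction term contributes the $\frac{1}{\abs{\Re z}\abs{\a - N}}$ piece after computing $\nr{e^{-z\cdot}}_{\dot H^1}$ and $\nr{z e^{-z\cdot}}_{L^2}$, both comparable to $\sqrt{\abs{\Re z}}$ up to constants, so the product with $\abs a$ gives the stated bound. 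The lower bound $\frac{c_0}{\abs z\,\abs{\a \mp N}}$ is obtained by testing the resolvent against a well-chosen $F$ — e.g. one for which the linear functional in $a$ is of size $\nr F_\HH$ — making the correction term dominate; the other lower bound $\frac{1}{\abs{\Re z}}$ is the general maximal-accretivity estimate, or follows by testing against data concentrated away from the vertex where $\Wc_\a$ behaves like the free operator.

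The main obstacle I anticipate is \textbf{part~(ii)}, the infinite algebraic multiplicity: one must verify that the Jordan-chain construction genuinely produces linearly independent generalised eigenvectors living in $\Dom(\Wc_\a)$ at every level, i.e.\ that the iterated particular solutions $x^k e^{-zx}$ (times constants) stay in $\tilde H^1(\G) \cap \dot H^2(\G^*)$ and continue to satisfy the Robin condition with $\a = \pm N$ — this is where the "magical" cancellation of the $(\a \mp N)$ coefficient is used repeatedly and must be tracked carefully through the recursion. A secondary technical point is getting the constants in (iii) uniform in $z$ across all of $\C_+$, including near $i\R$ where $\abs{\Re z}\to 0$ and near $\infty$; the explicit formulas make this bookkeeping rather than conceptual work, but it requires care in separating the roles of $\abs z$, $\abs{\Re z}$, and $\abs{\a\mp N}$.
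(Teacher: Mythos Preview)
Your overall approach matches the paper's (explicit resolvent computation via the Green kernel on each edge, reducing to a scalar equation for the boundary constant whose coefficient is $(\a \mp N)$), and most of the pieces are in the right place. There is, however, one genuine gap and a few inaccuracies.

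\textbf{The gap.} You never show that $i\R \subset \sigma(\Wc_\a)$. Your sentence ``combined with $\C_- \subset \rho(\Wc_\a)$ and the fact \ldots\ that $i\R$ contains no spectrum, we get $\sigma(\Wc_\a) = i\R$'' is self-contradictory: if $i\R$ contained no spectrum, the spectrum would be empty. Part~(i) of the theorem asserts only that $i\R$ carries no \emph{point or residual} spectrum; the imaginary axis is purely continuous spectrum, and this must be proved. The paper does it by a Weyl-sequence argument (Proposition~\ref{prop:spectrum-iR}): for $i\theta \in i\R$ take $u_{n}(x) = n^{-1/2} e^{i\theta x}\phi(x/n)$ supported on a single edge away from the vertex and check $\nr{(\Wc_\a - i\theta)U_n}_\HH / \nr{U_n}_\HH \to 0$. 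This is the ``data concentrated away from the vertex'' idea you mention in passing under~(iii), but you have not connected it to the inclusion $i\R \subset \sigma$, and without it the main spectral statement is not established. Note also that the lower bound $\nr{(\Wc_\a - z)^{-1}} \geq 1/|\Re z|$ in~(iii) is \emph{not} a maximal-accretivity estimate (accretivity gives an \emph{upper} bound, and on the opposite half-plane); the paper obtains it from $\nr{(\Wc_\a-z)^{-1}} \geq 1/\mathrm{dist}(z,\sigma(\Wc_\a))$, which again requires knowing $i\R \subset \sigma$.

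\textbf{Minor points.} The adjoint is $\Wc_\a^* = -\Wc_{-\bar\a}$ (Proposition~\ref{prop:adjoint}), not $\pm\Wc_{\bar\a}$; the correct formula is what makes the residual-spectrum argument go through. For the lower bound $c_0/(|z|\,|\a\mp N|)$ the paper takes the more direct route of testing the \emph{operator} (not the resolvent) against the quasi-mode $U = (u,zu)$ with $u_j(x) = e^{-\a z x / N}$, which satisfies the Robin condition exactly and makes $\nr{(\Wc_\a - z)U}_\HH$ explicitly computable; your ``well-chosen $F$'' approach could also work but is less immediate. For part~(ii), your concern about the Jordan chain is well placed: when $\a = N$ the scalar equation reads $0\cdot a = (\text{functional of }F)$, so solvability is a compatibility condition, not automatic. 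The paper sidesteps the recursive check entirely by writing down the chain in closed form, $u_{n,j}(x) = \frac{(-1)^{n-1} x^{n-1}}{(n-1)!} e^{-zx}$ and $v_n = z u_n + u_{n-1}$, and verifying directly that each $U_n \in \Dom(\Wc_N)$ with $(\Wc_N - z)U_n = U_{n-1}$.
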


\subsection{The damped wave equation}

Finally we go back to the time-dependant problem \eqref{eq:wave}--\eqref{eq:wave-CI}, or equivalently \eqref{eq:wave-Wc}. If $\Re(\a) \leq 0$, the operator $\Wc_\a$ generates by Theorem~\ref{th:Wa-max-diss} a contractions semigroup, so the problem \eqref{eq:wave-Wc} is well posed on $\R_+$. 
Moreover the energy 
$$
E(u;t) = \nr{\partial_t u(t)}_{L^2(\G)}^2 + \nr{\partial_x u(t)}_{L^2(\G)}^2 
$$
of the solution~$u$ is non-increasing.

In this paragraph we address the question of well-posedness and growth of the energy for \eqref{eq:wave-Wc} (or equivalently for \eqref{eq:wave}--\eqref{eq:wave-CI}) when $\Re(\a) > 0$. We have similar results for negative times when $\Re(\a) < 0$.

\begin{theorem} \label{th:damped-wave}
Let $\a \in \C_+$ and $(f,g) \in \Dom(\Wc_\a)$.
\begin{enumerate} [\rm(i)]
\item Assume that $\a \neq N$. The problem \eqref{eq:wave}--\eqref{eq:wave-CI} has a unique solution $u$ on $\R_+$. Moreover there exists $C > 0$ independent of $\a$ 
and $(f,g)$ such that for $t \geq 0$ we have
\begin{equation} \label{eq:energy}
E(u;t) \leq C \left( 1 + \frac 1 {\abs {\a - N}^2} \right) E(u;0).
\end{equation}

\item Assume that $\a = N$. Let 
\[
t_0 = \sup \set{t \geq 0 \, : \, \sum_{j=1}^N \big( f_j'(s) + g_j(s) \big) = 0 , \forall s \in [0,t] } \quad \in [0,+\infty].
\]
If $t_0 > 0$ then \eqref{eq:wave}--\eqref{eq:wave-CI} has an infinite number of solutions on $[0,t_0[$. In particular there exists a solution $u$ such that 
\[
E(u;t) \limt {t}{t_0} + \infty.
\]
If $t_0$ is finite then for any $\e > 0$ the problem \eqref{eq:wave}--\eqref{eq:wave-CI} has no solution on $[0,t_0+\e[$.  
\end{enumerate}
\end{theorem}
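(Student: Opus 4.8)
\textbf{Proof proposal for Theorem~\ref{th:damped-wave}.}

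The plan is to reduce everything to a d'Alembert-type representation of the solution on each edge, exploiting the fact that the PDE on each half-line is just the free wave equation, so that the interior dynamics are explicit and all the subtlety is concentrated in the vertex condition \eqref{eq:wave-Robin}. On each edge $j$, any solution of \eqref{eq:wave} has the form $u_j(t,x) = F_j(x-t) + G_j(x+t)$ with the outgoing part $F_j$ and incoming part $G_j$ determined by the Cauchy data $(f_j,g_j)$ for arguments in $\R_+$, while $F_j$ on $\R_-$ (i.e.\ for $x-t<0$) is the unknown to be reconstructed from the vertex conditions. Imposing continuity \eqref{eq:wave-continuite} forces a common value $p(t) := u(t,0)$, and plugging the representation into the Robin condition \eqref{eq:wave-Robin} yields, after differentiating, a first-order linear ODE for $p$ of the schematic form $(\a - N)\, p'(t) = (\text{known incoming data at time } t)$, with the known right-hand side built from $\sum_j (f_j'(\cdot) + g_j(\cdot))$ evaluated along characteristics.

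For part~(i), when $\a \neq N$ the coefficient $\a - N$ is invertible, so this ODE has a unique solution $p$, which reconstructs $F_j|_{\R_-}$ and hence the unique solution $u$; the regularity bookkeeping needed to place $u$ in the strong-solution class of Section~\ref{Sec.results} is routine once one checks the compatibility of the data at $t=0$, which is exactly the membership $(f,g)\in\Dom(\Wc_\a)$. The energy bound \eqref{eq:energy} then follows by an energy identity: differentiating $E(u;t)$ and integrating by parts on each edge gives $\frac{d}{dt}E(u;t) = 2\Re(\a)\,|v(t,0)|^2 \geq 0$, so the energy is nondecreasing, but the \emph{amount} of growth is controlled by estimating $\int_0^t |v(s,0)|^2\,ds = \int_0^t |p'(s)|^2\,ds$ against $E(u;0)$ using the ODE solved by $p$; the factor $\bigl(1 + |\a-N|^{-2}\bigr)$ appears precisely because $p' = (\a-N)^{-1}(\cdots)$ and the incoming data are controlled by the initial energy. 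Alternatively, and perhaps more cleanly, one can derive \eqref{eq:energy} directly from the resolvent bound of Theorem~\ref{th:spectrum}(iii) together with the Gearhart--Prüss / Hille--Yosida machinery, or simply by integrating the spectral representation — but the d'Alembert route keeps the constant $C$ manifestly independent of $\a$ and $(f,g)$.

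For part~(ii), when $\a = N$ the ODE for $p$ degenerates to the \emph{constraint} $0 = \sum_j(f_j'(s)+g_j(s))$ along characteristics reaching the vertex: for $t < t_0$ this constraint is automatically satisfied by the definition of $t_0$, so $p$ is \emph{completely free} on $[0,t_0[$ — any sufficiently regular $p$ with the correct Cauchy value and derivative at $t=0$ produces a solution — which is the source of non-uniqueness, and choosing $p$ to blow up (e.g.\ $p(t) \sim (t_0-t)^{-1}$ if $t_0<\infty$, or $p(t)\to\infty$ as $t\to t_0$ if $t_0=\infty$) yields a solution with $E(u;t)\to+\infty$, since $E$ controls $|p'|$ from below via the outgoing part it generates. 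If $t_0<\infty$, then just past $t_0$ the constraint $\sum_j(f_j'(s)+g_j(s))=0$ fails for some $s$ arbitrarily close to $t_0$ (by maximality of $t_0$ and continuity), so no choice of $p$ can satisfy the vertex condition there, giving non-existence on $[0,t_0+\e[$.

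The main obstacle I expect is \emph{bookkeeping of regularity and compatibility at the characteristic cone through the vertex}: the d'Alembert pieces $F_j,G_j$ are only as regular as $\dot H^1$/$L^2$ data allow, and matching them at $x = t$ (the characteristic emanating from the vertex at time $0$) so that $u$ genuinely lies in $C^0(I;\tilde H^1(\G)\cap\dot H^2(\G^*))\cap C^1(I;H^1(\G))\cap C^2(I;L^2(\G))$ requires that the reconstructed $p$ be $H^2_{\mathrm{loc}}$ in time with $p(0),p'(0)$ matching $f(0)$ and $g(0)$ and the second-derivative jump across $x=t$ being absorbed by the $\dot H^2(\G^*)$ (rather than $H^2(\G)$) requirement; verifying that $(f,g)\in\Dom(\Wc_\a)$ is exactly the condition making this work at $t=0$, and propagating it is where the free-wave structure must be used carefully. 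The energy estimate itself is then comparatively soft.
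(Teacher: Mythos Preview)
Your proposal is correct and is essentially the paper's own argument: both use the d'Alembert form $u_j(t,x)=\f_j(x-t)+\psi_j(x+t)$ on each edge and reduce everything to determining the outgoing data at the vertex, the paper solving the linear system $\f(-s)=-M_-^{-1}M_+\psi(s)$ for the vector of outgoing profiles while you equivalently derive the scalar relation $(\a-N)\,p'(t)=-\sum_j\big(f_j'(t)+g_j(t)\big)$ for $p(t)=u(t,0)$, and the regularity matching across $x=t$ that you flag as the main obstacle is exactly what the paper checks via $(f,g)\in\Dom(\Wc_\a)$. For \eqref{eq:energy} the paper computes $E(u;t)$ directly in the d'Alembert variables and bounds $\nr{M_-^{-1}M_+}$; your energy-identity route $\frac{d}{dt}E=2\Re(\a)\abs{p'}^2$ followed by substitution of the ODE works just as well (the constant is $\a$-independent once you use $\Re(\a)\le\abs{\a-N}+N$), but your Gearhart--Pr\"uss alternative does \emph{not}: the resolvent bound of Theorem~\ref{th:spectrum}(iii) blows up like $1/\Re(z)$ near $i\R$, so it only gives $\nr{e^{t\Wc_\a}}\le M_\o e^{\o t}$ for each $\o>0$, not a bound uniform in $t$.
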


\section{General properties of the wave operator}\label{Sec.general}

In this section we prove some basic properties for the wave operator $\Wc_\a$. In particular, we give an expression for its resolvent with the spectral parameter lying in the suitable half-place (depending on the sign of $\Re(\a)$) and deduce that $\Wc_\a$ is maximal accretive and/or maximal dissipative (Theorem \ref{th:Wa-max-diss}). Notice that our proofs are quite robust and could be applied for the wave equation on more general graphs.

To prepare the proof of Theorem \ref{th:conv-norm-res} in the next section, we proceed with the same analysis for $\Wc_{\a,n}$, $n \in \N^*$.

We first record the following direct consequence of formula~\eqref{eq:fquad}.
 
\begin{proposition} \label{prop:Wa-diss}
The operator $\Wc_\a$ is accretive (respectively dissipative, respectively skew-symmetric) if $\Re(\a) \geq 0$ (respectively $\Re(\a) \leq 0$, respectively $\Re(\a) = 0$). In particular, if $\a \in \overline{\C_\pm}$ and $z \in \C_{\mp}$ then $(\Wc_\a-z)$ is injective with closed range. For $n \in \N^*$ the operator $\Wc_{\a,n}$ has the same properties.
\end{proposition}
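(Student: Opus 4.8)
The plan is to read everything off the quadratic-form identity \eqref{eq:fquad}, so the first task is to establish that identity cleanly. Starting from $\langle\Wc_\a U,U\rangle_\HH=\langle v',u'\rangle_{L^2(\G)}+\langle u'',v\rangle_{L^2(\G)}$, I would integrate by parts on each edge in the second term; the contribution at $+\infty$ vanishes because $u_j'$ and $v_j$ extend continuously to $[0,+\infty)$ and tend to~$0$ at infinity (Sobolev embedding of $H^1(\R_+^*)$; note that $u_j'\in H^1(\R_+^*)$ by virtue of $u\in\tilde H^1(\G)\cap\dot H^2(\G^*)$), while the contribution at~$0$ equals $-\sum_j u_j'(0)\overline{v_j(0)}=-\big(\sum_j u_j'(0)\big)\overline{v(0)}=\a\,|v(0)|^2$, using continuity of $v$ at $0$ and the Robin condition. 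This gives \eqref{eq:fquad}, hence $\Re\langle\Wc_\a U,U\rangle_\HH=\Re(\a)\,|v(0)|^2$ for every $U=(u,v)\in\Dom(\Wc_\a)$. Accretivity for $\Re(\a)\geq0$ and dissipativity for $\Re(\a)\leq0$ are then immediate; when $\Re(\a)=0$ the form $\langle\Wc_\a U,U\rangle_\HH$ is purely imaginary, and a standard polarisation argument (a sesquilinear form vanishing on the diagonal over $\C$ vanishes identically) upgrades this to skew-symmetry $\langle\Wc_\a U,V\rangle_\HH=-\langle U,\Wc_\a V\rangle_\HH$.

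For the second assertion, let $\a\in\overline{\C_\pm}$ and $z\in\C_\mp$. Using accretivity (resp.\ dissipativity) I would write, for $U\in\Dom(\Wc_\a)$,
\[
|\Re(z)|\,\|U\|_\HH^2\leq\big|\Re\langle(\Wc_\a-z)U,U\rangle_\HH\big|\leq\|(\Wc_\a-z)U\|_\HH\,\|U\|_\HH,
\]
so that $\|(\Wc_\a-z)U\|_\HH\geq|\Re(z)|\,\|U\|_\HH$ with $|\Re(z)|>0$; this yields injectivity of $\Wc_\a-z$. For closedness of the range, I would take $(U_n)\subset\Dom(\Wc_\a)$ with $(\Wc_\a-z)U_n\to w$ in $\HH$; the coercive bound forces $(U_n)$ to be Cauchy in $\HH$, and then, writing $U_n=(u_n,v_n)$, convergence of $v_n$ in $\tilde H^1(\G)$ together with convergence of $u_n''$ in $L^2(\G)$ forces convergence of $u_n$ in $\dot H^2(\G^*)$ and of $v_n$ in $H^1(\G)$. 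The vertex conditions then survive the passage to the limit by continuity of the trace functionals $u\mapsto u_j'(0)$ on $\tilde H^1(\G)\cap\dot H^2(\G^*)$ and $v\mapsto v(0)$ on $H^1(\G)$, so the limit lies in $\Dom(\Wc_\a)$ with $(\Wc_\a-z)U=w$. (This argument simultaneously shows that $\Wc_\a$ is closed, as is needed later for Theorem~\ref{th:Wa-max-diss}.)

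For $\Wc_{\a,n}$ the same scheme applies once the analogue of \eqref{eq:fquad} is established: here $\Wc_{\a,n}U=(v,u''+\a\rho^n v)$, so the integration by parts gives
\[
\langle\Wc_{\a,n}U,U\rangle_\HH=2i\Im\langle v',u'\rangle_{L^2(\G)}+\a\int_{\G}\rho^n|v|^2,
\]
the boundary term at $0$ now vanishing outright since $\sum_j u_j'(0)=0$ on $\Dom(\Wc_{\a,n})$, and the integral being finite because $v_j\in H^1(\R_+^*)$ is bounded while $\rho_j^n\in L^1(\R_+^*)$. Since $\int_{\G}\rho^n|v|^2\geq0$, the real part is again $\Re(\a)$ times a nonnegative quantity, and accretivity/dissipativity/skew-symmetry together with injectivity and closed range of $\Wc_{\a,n}-z$ follow exactly as above.

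The only slightly delicate points --- and so the places to be careful --- are the vanishing of the boundary terms at $+\infty$ in the integrations by parts (resting on the Sobolev embeddings above) and the closed-range step, where one must check that the limiting pair still satisfies the vertex conditions; the rest is a short computation.
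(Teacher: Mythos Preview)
Your argument is correct and follows exactly the paper's approach: the paper states the proposition as ``a direct consequence of formula~\eqref{eq:fquad}'' and gives no further proof, so you have simply filled in the details that the paper leaves implicit---the integration-by-parts derivation of~\eqref{eq:fquad} (sketched earlier in the paper), the coercive estimate, the closed-range/closedness argument, and the analogue for $\Wc_{\a,n}$.
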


Next we mention the following symmetry result.

\begin{proposition} \label{prop:adjoint}
For $\a \in \C$ we have $\Wc_\a^* = -\Wc_{-\bar \a}$.
\end{proposition}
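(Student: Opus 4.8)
The plan is to verify the adjoint relation $\Wc_\a^* = -\Wc_{-\bar\a}$ by the standard two-step procedure: first show the inclusion $-\Wc_{-\bar\a} \subseteq \Wc_\a^*$ via integration by parts, and then upgrade this to equality using the resolvent information already available from Proposition~\ref{prop:Wa-diss} and Theorem~\ref{th:Wa-max-diss} (or rather, just from maximality, which is proved independently). Concretely, I would pick $U = (u,v) \in \Dom(\Wc_\a)$ and $\tilde U = (\tilde u, \tilde v) \in \Dom(\Wc_{-\bar\a})$ and compute
\[
\innp{\Wc_\a U}{\tilde U}_\HH - \innp{U}{-\Wc_{-\bar\a}\tilde U}_\HH,
\]
expanding both inner products according to the definition of the $\HH = \tilde H^1(\G)\times L^2(\G)$ inner product. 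This pairing has two pieces: the $\tilde H^1$ part $\innp{v'}{\tilde u'}_{L^2(\G)}$ plus its counterpart from the second slot, and the $L^2$ part $\innp{u''}{\tilde v}_{L^2(\G)}$ plus its counterpart. Integrating by parts twice on each half-line edge, the volume terms cancel and one is left only with boundary contributions at $x=0$ (the contributions at $x=+\infty$ vanish by the $\dot H^1$, $\dot H^2$ decay/integrability built into the domains).

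The key computation is that the surviving boundary term is exactly
\[
\Big(\sum_{j=1}^N u_j'(0)\Big)\overline{\tilde v(0)} + v(0)\overline{\Big(\sum_{j=1}^N \tilde u_j'(0)\Big)},
\]
using continuity of $u,v,\tilde u,\tilde v$ at the vertex to pull the common values out of the sum over edges. Now substitute the Robin condition from $\Dom(\Wc_\a)$, namely $\sum_j u_j'(0) = -\a\, v(0)$, and the Robin condition from $\Dom(\Wc_{-\bar\a})$, namely $\sum_j \tilde u_j'(0) = \bar\a\, \tilde v(0)$; the boundary term becomes $-\a\, v(0)\overline{\tilde v(0)} + v(0)\,\overline{\bar\a\,\tilde v(0)} = -\a\, v(0)\overline{\tilde v(0)} + \a\, v(0)\overline{\tilde v(0)} = 0$. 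Hence $\innp{\Wc_\a U}{\tilde U}_\HH = \innp{U}{-\Wc_{-\bar\a}\tilde U}_\HH$ for all such $U,\tilde U$, which is precisely the statement $-\Wc_{-\bar\a}\subseteq\Wc_\a^*$.

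For the reverse inclusion $\Wc_\a^*\subseteq -\Wc_{-\bar\a}$, I would invoke maximality: by Theorem~\ref{th:Wa-max-diss}, choosing $z$ with $\Re(z)>0$ large (if $\Re(\a)\le 0$) or $\Re(z)<0$ (if $\Re(\a)\ge 0$) — more symmetrically, pick any $z\in\C$ with $\Re(z)\ne 0$ so that one of $\pm z$ lies in the resolvent set of $\Wc_\a$ and the corresponding sign lies in that of $\Wc_{-\bar\a}$, using $\overline{\C_\pm}$ for $\a$ versus $-\bar\a$. Then $\Wc_\a - z$ is boundedly invertible, so $(\Wc_\a)^* - \bar z = (\Wc_\a - z)^*$ is boundedly invertible, i.e.\ $\bar z\in\rho(\Wc_\a^*)$; likewise $-\bar z$ may be arranged to lie in $\rho(-\Wc_{-\bar\a}) $. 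Having already established $-\Wc_{-\bar\a}\subseteq\Wc_\a^*$ with both operators having nonempty resolvent set at the same point, the inclusion of a closed operator into another with overlapping resolvent set forces equality (two resolvents agreeing on their common domain, with the smaller operator's resolvent being a restriction of the larger's, are equal). I expect the main obstacle to be purely bookkeeping: carefully justifying that the boundary terms at $+\infty$ genuinely vanish given only $u\in\dot H^2(\G^*)$ with $u'\in L^2$ (so $u'$ and $\tilde u'$ need not themselves go to zero pointwise a priori, but products like $u''\tilde v$ are integrable and the primitive $\int_0^X u_j'' \tilde v_j$ converges), and double-checking the sign conventions so that $-\bar\a$ rather than $\bar\a$ or $-\a$ appears — the cancellation $-\a + \overline{\bar\a} = 0$ is what pins down the sign.
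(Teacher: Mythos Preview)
Your first step (the integration-by-parts inclusion $-\Wc_{-\bar\a}\subseteq\Wc_\a^*$) matches the paper's, which simply asserts the identity $\innp{\Wc_\a U}{\tilde U}_\HH=-\innp{U}{\Wc_{-\bar\a}\tilde U}_\HH$ ``by direct computation''; your concern about boundary terms at $+\infty$ is harmless, since $u\in\dot H^1(\G)\cap\dot H^2(\G^*)$ gives $u_j'\in H^1(\R_+^*)$, and $\tilde v_j\in H^1(\R_+^*)$ as well, so both factors in $u_j'\overline{\tilde v_j}$ tend to zero at infinity. For the reverse inclusion you diverge from the paper. The paper argues directly: it takes an arbitrary $\tilde U\in\Dom(\Wc_\a^*)$, tests the adjoint relation against $U$'s with components in $C_0^\infty(\R_+^*)$ to recover $\tilde u_j''\in L^2$ and $\tilde v_j'\in L^2$, and then integrates by parts against a general $U\in\Dom(\Wc_\a)$ to extract the Robin condition $\sum_j\tilde u_j'(0)-\bar\a\tilde v(0)=0$. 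Your route via maximality and overlapping resolvent sets is correct and slicker, but note that in the paper's logical order Proposition~\ref{prop:adjoint} \emph{precedes} the resolvent construction (Proposition~\ref{prop:res-Wca}) and hence Theorem~\ref{th:Wa-max-diss}. You are right that those results are proved without appealing to the adjoint formula, so there is no genuine circularity, but your argument forces a reordering. The paper's self-contained computation has the advantage of making the adjoint available before any spectral input is developed, which is used later (e.g.\ in Proposition~\ref{prop:spectrum-iR} to exclude residual spectrum on $i\R$).
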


\begin{proof}
For $U \in \Dom(\Wc_a)$ and $\tilde U \in \Dom(\Wc_{-\bar \a})$ we can check by direct computation that 
\[
\big\langle \Wc_\a U, \tilde U \big\rangle_\HH = - \big\langle U , \Wc_{-\bar \a} \tilde U \big\rangle_\HH.
\]
This proves that $\Dom(\Wc_{-\bar \a}) \subset \Dom(\Wc_\a^*)$ and that $\Wc_\a^* = -\Wc_{-\bar \a}$ on $\Dom(\Wc_{-\bar \a})$. 

Now let $\tilde U = (\tilde u, \tilde v) \in \Dom(\Wc_\a^*)$ and $F =  (f,g) =\Wc_\a^* \tilde U \in \HH$. For all $U = (u,v) \in \Dom(\Wc_\a)$ we have $\big \langle \Wc_\a U , \tilde U \big \rangle _\HH = \innp U F_\HH$, which gives
\begin{equation} \label{eq:u'f'vg}
\innp{v'}{\tilde u'}_{L^2(\G)} + \innp{u''}{\tilde v}_{L^2(\G)} = \innp{u'}{f'}_{L^2(\G)} + \innp{v}{g}_{L^2(\G)}.
\end{equation}
Let $j \in \Nc$. Applied with $u = 0$, $v_j \in C_0^\infty(\R_+^*)$ and $v_k = 0$ for $k \neq j$, this proves that $\tilde u_j' \in H^1(\R_+^*)$ and $\tilde u_j'' = -g_j$. Applied with $v= 0$, $u_j \in C_0^\infty(\R_+^*)$ and $u_k = 0$ for $k \neq j$, we deduce that there exists a constant $\b_j$ such that $\tilde v_j' = - f_j' + \b_j$ in the sense of distributions. Since $f_j'$ and $\tilde v_j$ are in $L^2(\R_+^*)$, we necessarily have $\b_j = 0$, so $\tilde v_j ' = f_j'$ belongs to $L^2(\R_+^*)$. 

We can rewrite \eqref{eq:u'f'vg} as
\begin{multline*}
\innp{u'}{f'}_{L^2(\G)} + \innp{v}{g}_{L^2(\G)} \\
=
- \innp{v}{\tilde u''}_{L^2(\G)} - v(0) \sum_{j=1}^N \overline{\tilde u_j'(0)}
- \innp{u'}{\tilde v'}_{L^2(\G)} - \sum_{j=1}^N u_j'(0) \overline{\tilde v(0)}.
\end{multline*}
This gives 
\[
- v(0) \sum_{j=1}^N \overline{\tilde u_j'(0)} +  \a  v(0) \overline{\tilde v(0)} = 0,
\]
which implies that $\sum_{j=1}^N \tilde u_j'(0) - \bar \a \tilde v(0) = 0$. Then $\Dom(\Wc_\a) \subset \Dom(\Wc_{-\bar \a})$, and the proof is complete. 
\end{proof}

In Proposition \ref{prop:res-Wca} below, 
we will give an expression for the resolvent of $\Wc_\a$ valid in $H^1(\G) \times L^2(\G)$. This is a dense subset of $\HH$ by the following classical lemma.

\begin{lemma}
$H^1(\G)$ is dense in $\dot H^1(\G)$.
\end{lemma}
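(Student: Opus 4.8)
The plan is to use that, after passing to the quotient by constant functions, the norm on $\dot H^1(\G)$ is merely the seminorm $u \mapsto \nr{u'}_{L^2(\G)}$, so that the assertion is equivalent to the density of $\set{u' \,:\, u \in H^1(\G)}$ in $L^2(\G)$; this is exactly what is needed so that the resolvent formula of the next section, which is derived on $H^1(\G) \times L^2(\G)$, in fact characterises the resolvent on all of $\HH = \tilde H^1(\G) \times L^2(\G)$. First I would observe that $u \mapsto u'$ maps $\dot H^1(\G)$ \emph{onto} $L^2(\G)$: for $h = (h_j)_{j \in \Nc} \in L^2(\G)$ the function with components $u_j(x) = \int_0^x h_j(t)\,dt$ belongs to $\dot H^1(\G^*)$, satisfies $u_j(0) = 0$ for all $j \in \Nc$ (hence is continuous at the vertex, so lies in $\dot H^1(\G)$) and has $u' = h$. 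Thus $u \mapsto u'$ induces an isometric isomorphism $\tilde H^1(\G) \to L^2(\G)$, and under this identification $H^1(\G)$ corresponds to a subspace containing $\set{\f' \,:\, \f \in C_0^\infty(\G^*)}$ (where $C_0^\infty(\G^*)$ denotes functions each of whose components is in $C_0^\infty(\R_+^*)$), since any such $\f$ vanishes near the vertex — so is trivially continuous there — and has compact support, whence $\f,\f' \in L^2(\G)$, i.e.\ $\f \in H^1(\G)$.

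It then remains to prove that $\set{\f' \,:\, \f \in C_0^\infty(\G^*)}$ is dense in $L^2(\G) = L^2(\R_+^*)^N$. As this set is the Cartesian product over the $N$ edges of $\set{\f' \,:\, \f \in C_0^\infty(\R_+^*)}$, it suffices to check density in $L^2(\R_+^*)$ on a single edge, and for that I would compute the orthogonal complement: if $f \in L^2(\R_+^*)$ satisfies $\innp{f}{\f'}_{L^2(\R_+^*)} = 0$ for all $\f \in C_0^\infty(\R_+^*)$, then $f' = 0$ in $\mathcal{D}'(\R_+^*)$, so $f$ is almost everywhere equal to a constant; the only constant in $L^2(\R_+^*)$ being $0$, we get $f = 0$. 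This gives the claimed density, and hence the lemma.

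The only point requiring a little care is the compatibility of the approximation with the continuity condition at the vertex, and I expect this to be the sole mild obstacle; it is dealt with here by simply choosing the approximants in $C_0^\infty(\G^*)$, which vanish near $0$ so that continuity at the vertex is automatic. An alternative, more computational route would approximate a given $u \in \dot H^1(\G)$ directly by the truncations $\chi(\,\cdot\,/R)\,u$ with $\chi \in C_0^\infty([0,+\infty))$ equal to $1$ near $0$; there one must verify that $R^{-1}\chi'(\,\cdot\,/R)\,u \limt R {+\infty} 0$ in $L^2(\G)$, which comes down to the elementary fact that $u_j(x)/\sqrt{x} \limt x {+\infty} 0$ whenever $u_j' \in L^2(\R_+^*)$. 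The argument sketched above avoids this estimate, and I would present it as the primary proof.
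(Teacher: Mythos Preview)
Your argument is correct and takes a genuinely different route from the paper. The paper proceeds by direct truncation: for $u \in \dot H^1(\G)$ it takes a cutoff $\chi_R = \chi(\cdot/R)$ and shows $\chi_R u \to u$ in the $\dot H^1$-seminorm; the only nontrivial term is $\nr{\chi_R' u_j}_{L^2}$, and to control it the paper proves precisely the growth estimate $\abs{u_j(x)}^2 \leq \e x$ for large $x$ (equivalently, $u_j(x)/\sqrt{x} \to 0$) that you mention at the end as the ``alternative, more computational route''. Your primary proof instead exploits the isometric identification $\tilde H^1(\G) \simeq L^2(\G)$ via $u \mapsto u'$ and reduces everything to the density of $\set{\f' : \f \in C_0^\infty(\R_+^*)}$ in $L^2(\R_+^*)$, which you dispatch by the orthogonal-complement argument (a function orthogonal to all such $\f'$ has zero distributional derivative, hence is a constant in $L^2(\R_+^*)$, hence zero). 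This is more conceptual and entirely avoids the growth estimate; the paper's approach, on the other hand, is constructive and produces explicit approximants $\chi_R u$ that retain the continuity at the vertex and the value $u(0)$, which can be convenient in applications.
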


\begin{proof}
Let $u = (u_j)_{j \in \Nc} \in \dot H^1(\G)$. Let $j \in \Nc$. Let $\chi \in C^\infty(\R_+,[0,1])$ be equal to 1 on $[0,1]$ and equal to 0 on $[2,+\infty[$. For $R \geq 1$ and $x \geq 0$ we set $\chi_R(x) = \chi \big( \frac x R \big)$. We have 
\begin{align*}
 \nr{((1-\chi_R) u_j)'}_{L^2(\R_+^*)} \leq \nr{(1-\chi_R)u_j'}_{L^2(\R_+^*)} + \nr{\chi_R' u_j}_{L^2(\R_+^*)}.
\end{align*}
The first term goes to 0 as $R\to\infty$ by the dominated convergence theorem. 
We estimate the second term. Let $\e > 0$. There exists $C_j > 0$ such that for all $x \geq 1$ we have $\abs{u_j(x)} \leq C_j \sqrt x$. Let $x_0 >0$ be so large that $2 \sqrt 2 C_j \nr{u'}_{L^2(x_0,\infty)} \leq \e$, and let $R \geq 1$ be so large that $\abs{u(x_0)}^2 \leq \frac {\e R}2$. Then for $x \geq \max(x_0,R)$ we have  
\begin{multline*}
\abs{u_j(x)}_{L^2(\R_+^*)}^2 \leq \abs{u_j(x_0)}^2 + 2 \int_{x_0}^x \abs{u_j(s)} \abs{u_j'(s)} \, ds\\
\leq \abs {u_j(x_0)}^2 + \sqrt 2 C_j x \nr{u_j'}_{L^2(x_0,\infty)} \leq \e x,
\end{multline*}
so
\begin{align*}
\nr{\chi_R' u_j}^2 
\leq \frac {\nr{\chi'}_\infty^2}{R^2} \int_R^{2R} \abs{u_j(x)}^2 \, dx 
\leq 2 \e \nr{\chi'}_\infty^2.
\end{align*}
The conclusion follows.
\end{proof}

We denote by $H^{-1}(\G)$ the space of continuous semilinear forms on $H^1(\G)$ (we have $\ell (\f_1+\b \f_2) = \ell(\f_1) + \bar \b \ell(\f_2)$ for $\ell \in H\inv(\G)$, $\f_1,\f_2 \in H^1(\G)$ and $\b \in \C$). In particular, $\d : \phi \mapsto \overline{\phi(0)}$ belongs to $H\inv(\G)$. We refer to \cite[pp.~3--4]{Edmunds-Evans} for a discussion about this choice.

For $\a \in \C$, $z \in \C$ and $n \in \N^*$ 
we define bounded operators $Q_\a(z)$ and $Q_{\a,n}(z)$ in $\Lc(H^1(\G),H\inv(\G))$ by
\[
\innp{Q_\a(z) \psi}{\f}_{H\inv(\G),H^1(\G)} = \innp{\psi'}{\f'}_{L^2(\G)} - \a z \psi(0) \overline{\f(0)} + z^2 \innp{\psi}{\f}_{L^2(\G)}
\]
and 
\[
\innp{Q_{\a,n}(z) \psi}{\f}_{H\inv(\G),H^1(\G)} = \innp{\psi'}{\f'}_{L^2(\G)} - \a z \innp{\rho^n \psi}{\f}_{L^2(\G)} + z^2 \innp{\psi}{\f}_{L^2(\G)},
\]
for all $\psi , \f \in H^1(\G)$ (the scalar products are linear on the left and semilinear on the right).

\begin{proposition} \label{prop:Raz}
Let $\a \in \overline{\C_\pm}$, $z \in \C_\mp$ and $n \in \N^*$. Then $Q_\a(z)$ and $Q_{\a,n}(z)$ are invertible. Moreover, the norm $\nr{Q_{\a,n}(z)\inv}_{\Lc(H\inv(\G),H^1(\G))}$ is bounded uniformly in $n \in \N^*$.
\end{proposition}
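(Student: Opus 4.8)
The plan is to prove invertibility of $Q_\a(z)$ and $Q_{\a,n}(z)$ by the Lax--Milgram theorem, exploiting the sign of $\Re(\a)$ and the location of $z$ to get coercivity. Introduce the sesquilinear form $q_\a(z)[\psi,\f] = \langle \psi',\f'\rangle_{L^2(\G)} - \a z \psi(0)\overline{\f(0)} + z^2 \langle \psi,\f\rangle_{L^2(\G)}$ on $H^1(\G) \times H^1(\G)$, which is clearly bounded since $\psi \mapsto \psi(0)$ is continuous on $H^1(\G)$. The operator $Q_\a(z)$ is exactly the one associated with this form, so by Lax--Milgram it suffices to show that $q_\a(z)$ is coercive, i.e. there is $c > 0$ with $|q_\a(z)[\psi,\psi]| \geq c \nr{\psi}_{H^1(\G)}^2$. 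The same argument then applies to $Q_{\a,n}(z)$ with the term $-\a z \langle \rho^n \psi,\f\rangle_{L^2(\G)}$ replacing $-\a z \psi(0)\overline{\f(0)}$; here $|\langle \rho^n \psi,\psi\rangle|$ is controlled using the embedding $H^1(\R_+^*) \hookrightarrow L^\infty(\R_+^*)$ together with the scaling assumption $\sum_j \int \rho_j = 1$, which makes the relevant constants independent of $n$ — this uniformity is the point of that particular scaling.

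The key computation is to evaluate $q_\a(z)[\psi,\psi]$. Write $z = a + ib$ with $\pm a < 0$ (and similarly $z = 0$ on the boundary of the half-plane, which needs a separate but easy argument, or one reduces to $\C_\mp$ by density/continuity). Then $q_\a(z)[\psi,\psi] = \nr{\psi'}^2 - \a z |\psi(0)|^2 + z^2 \nr{\psi}^2$. Taking real and imaginary parts: $\Re q_\a(z)[\psi,\psi] = \nr{\psi'}^2 - \Re(\a z)|\psi(0)|^2 + \Re(z^2)\nr{\psi}^2$ and $\Im q_\a(z)[\psi,\psi] = -\Im(\a z)|\psi(0)|^2 + 2ab\nr{\psi}^2$. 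Since $\a \in \overline{\C_\pm}$ means $\pm\Re(\a) \geq 0$, and $\pm\Re(z) < 0$, one checks $\mp\Re(\a z) = \mp(\Re\a\,\Re z - \Im\a\,\Im z)$... this is not sign-definite on its own. The cleaner route is to recall formula~\eqref{eq:fquad} and the accretivity established in Proposition~\ref{prop:Wa-diss}: the point is that $Q_\a(z)$ is precisely the Schur complement arising when one solves $(\Wc_\a - z)(u,v) = (f,g)$, so its invertibility on the correct half-plane is equivalent to $z \in \rho(\Wc_\a)$, which Proposition~\ref{prop:Wa-diss} gives since $(\Wc_\a - z)$ is injective with closed range for $z \in \C_\mp$ and, by the adjoint identity $\Wc_\a^* = -\Wc_{-\bar\a}$ of Proposition~\ref{prop:adjoint}, its range is also dense. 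I would therefore prove the proposition by making this Schur-complement reduction explicit: show that for $\psi \in H^1(\G)$, $\langle Q_\a(z)\psi,\psi\rangle_{H\inv,H^1}$ equals, up to the correct algebraic manipulation, $-\langle (\Wc_\a - z)(\psi, \cdot\,),(\cdot)\rangle$ evaluated appropriately, so that a bound on $Q_\a(z)\inv$ follows from the resolvent bound $\nr{(\Wc_\a - z)\inv} \leq 1/(\mp\Re z)$.

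Alternatively — and this is probably the most self-contained path — one works directly: for $z = a+ib$ with $a \neq 0$, factor $q_\a(z)[\psi,\psi] = \nr{\psi'}^2 + z^2\nr{\psi}^2 - \a z |\psi(0)|^2$ and test against $\bar z / |z|$ or against $\psi$ and a rotated version. Concretely, consider $\overline{z}\, q_\a(z)[\psi,\psi] = \bar z \nr{\psi'}^2 + z|z|^2 \nr{\psi}^2 - \a |z|^2 |\psi(0)|^2$, whose real part is $a\nr{\psi'}^2 + a|z|^2\nr{\psi}^2 - \Re(\a)|z|^2|\psi(0)|^2$. When $\pm a < 0$ and $\pm\Re(\a) \geq 0$, $\mp$ of this is $|a|\nr{\psi'}^2 + |a||z|^2\nr{\psi}^2 + (\pm\Re\a)|z|^2|\psi(0)|^2 \geq |a|\min(1,|z|^2)\nr{\psi}_{H^1}^2$, giving coercivity with constant $c = |a|\min(1,|z|^2) > 0$. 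This proves invertibility of $Q_\a(z)$. For $Q_{\a,n}(z)$, the same manipulation produces the extra term $-\Re(\a)|z|^2 \langle \rho^n\psi,\psi\rangle$ in place of $-\Re(\a)|z|^2|\psi(0)|^2$; since $\langle \rho^n\psi,\psi\rangle \geq 0$, $\mp$ of $\overline{z}\,q_{\a,n}(z)[\psi,\psi]$ is still $\geq |a|\min(1,|z|^2)\nr{\psi}_{H^1}^2$, yielding the same coercivity constant, hence $\nr{Q_{\a,n}(z)\inv}_{\Lc(H\inv(\G),H^1(\G))} \leq 1/(|a|\min(1,|z|^2))$ uniformly in $n$.

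The main obstacle is getting the sign bookkeeping exactly right and, for the uniform bound on $Q_{\a,n}(z)\inv$, making sure the boundedness of $q_{\a,n}(z)$ (the upper bound, not just coercivity) is also $n$-independent: this needs $|\langle \rho^n\psi,\f\rangle| \leq \nr{\psi}_\infty \nr{\f}_\infty \sum_j\int\rho_j^n = \nr{\psi}_\infty\nr{\f}_\infty \lesssim \nr{\psi}_{H^1}\nr{\f}_{H^1}$, again using $\int \rho_j^n = \int\rho_j$ and $\sum_j\int\rho_j = 1$, so the scaling is exactly what guarantees the constant does not blow up. Everything else is a routine application of Lax--Milgram.
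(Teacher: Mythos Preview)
Your final approach---multiply $q_\a(z)[\psi,\psi]$ by $\bar z$ and take the real part---is correct and is essentially the same as the paper's proof. The paper multiplies by $e^{-i\theta}$ with $\theta = \arg(z) - \pi$, which (for $\Re(z) < 0$) equals $-\bar z/|z|$; so your rotation and the paper's differ only by a positive scalar, and your coercivity constant $|a|\min(1,|z|^2)$ matches the paper's $\cos(\theta)\min(1,|z|^2)$ after dividing through by $|z|$. The handling of $Q_{\a,n}(z)$ via $\langle \rho^n\psi,\psi\rangle \geq 0$ is also what the paper does.

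Two remarks on the detours. The Schur-complement route in your second paragraph is circular in the paper's logical order: Proposition~\ref{prop:Raz} is used to \emph{prove} that $z \in \rho(\Wc_\a)$ (Proposition~\ref{prop:res-Wca}), so you cannot appeal to the resolvent bound on $\Wc_\a$ to deduce invertibility of $Q_\a(z)$. And your closing concern about the continuity bound of $q_{\a,n}$ being $n$-independent is unnecessary for the stated conclusion: Lax--Milgram gives $\nr{Q_{\a,n}(z)^{-1}}_{\Lc(H^{-1},H^1)} \leq 1/c$ from the coercivity constant $c$ alone, and the upper (continuity) constant does not enter the inverse estimate.
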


\begin{proof}
We consider the case $\Re(\a) \geq 0$ and $\Re(z) < 0$ (the case $\Re(\a) \leq 0$ and $\Re(z) > 0$ is similar). Let $\th = \arg(z) - \pi \in \big]-\frac \pi 2 , \frac \pi 2 \big[$ and $\eta = \arg(\a) \in \big[-\frac \pi 2 , \frac \pi 2 \big]$. For $w \in H^1(\G)$ we have
\begin{eqnarray*}
\lefteqn{\Re \big(e^{-i\th} \innp{Q_{\a,n}(z) w}{w}_{H\inv(\G),H^1(\G)} \big)}\\
&& = \cos(\th) \nr{w'}_{L^2(\G)}^2 + \cos(\eta) \abs {\a z} \innp{\rho^n w}{w}_{L^2(\G)} + \cos(\th) \abs{z}^2 \nr{w}_{L^2(\G)}^2\\
&& \geq \min(1,\abs z^2) \cos(\th) \nr{w}_{H^1(\G)}^2.
\end{eqnarray*}
By the Lax--Milgram Theorem, $Q_{n,\a}(z)$ is invertible and $\nr{Q_{\a,n}(z)\inv}_{\Lc(H\inv(\G),H^1(\G))}$ is uniformly bounded in $n \in \N^*$. 
We proceed similarly for $Q_\a(z)$.
\end{proof}

We set $R_{\a}(z) = Q_\a(z)\inv$ and $R_{\a,n}(z) = Q_{\a,n}(z)\inv$.

\begin{proposition} \label{prop:Raz-H2}
Let $\a \in \overline{\C_\pm}$, $z \in \C_\mp$ and $n \in \N^*$. Let $h \in L^2(\O)$ and $\k \in \C$.
\begin{enumerate}[\rm(i)]
\item Let $w = R_\a(z) (h + \k \d) \in H^1(\G)$. Then for $j \in \Nc$ we have $w_j''  = z^2 w_j - h_j \in L^2(\R_+^*)$, and moreover 
\begin{equation} \label{eq:vertex-cond-kappa}
\sum_{j=1}^N w_j'(0) + \a w(0) = - \k.
\end{equation}
\item Let $w = R_{\a,n}(z) (h + \k \rho^n) \in H^1(\G)$. Then $\sum_{j \in \Nc} w'(0) = 0$ and for $j \in \Nc$ we have $w_j''  = - \a z \rho^n w + z^2 w_j - h_j - \k \rho^n \in L^2(\R_+^*)$.
\end{enumerate}
\end{proposition}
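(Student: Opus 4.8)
The plan is to go back to the variational definition of $w$ and to use it twice: first against test functions supported in a single edge away from the vertex, in order to recover the differential equation and the interior regularity on each edge, and then against arbitrary $\f\in H^1(\G)$, in order to read off the condition at the vertex. The solvability of $Q_\a(z)$ and $Q_{\a,n}(z)$ from Proposition~\ref{prop:Raz} guarantees that $w$ exists in $H^1(\G)$; everything else is local analysis on the half-lines.

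\textbf{Part~(i).} By definition, $w=R_\a(z)(h+\k\d)$ means $Q_\a(z)w=h+\k\d$ in $H\inv(\G)$, that is,
\[
\innp{w'}{\f'}_{L^2(\G)}-\a z\,w(0)\,\overline{\f(0)}+z^2\innp{w}{\f}_{L^2(\G)}=\innp{h}{\f}_{L^2(\G)}+\k\,\overline{\f(0)}\qquad\text{for all }\f\in H^1(\G).
\]
Fix $j\in\Nc$ and insert the test function whose components all vanish except the $j$-th one, which is an arbitrary $\f_j\in C_0^\infty(\R_+^*)$; this is an admissible element of $H^1(\G)$ with $\f(0)=0$, so the identity collapses to $\innp{w_j'}{\f_j'}_{L^2(\R_+^*)}=\innp{h_j-z^2w_j}{\f_j}_{L^2(\R_+^*)}$. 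Hence $w_j'\in H^1(\R_+^*)$ with $w_j''=z^2w_j-h_j$; since $w_j\in H^1(\R_+^*)$ and $h_j\in L^2(\R_+^*)$, this already yields $w_j''\in L^2(\R_+^*)$, so $w_j\in H^2(\R_+^*)$ and $w_j'$ is continuous on $[0,+\infty[$ and vanishes at $+\infty$, so that the trace $w_j'(0)$ is meaningful. Now take $\f\in H^1(\G)$ arbitrary and integrate by parts on each edge; the boundary term at $+\infty$ vanishes because $w_j'$ and $\f_j$ both lie in $H^1(\R_+^*)$, so $\innp{w_j'}{\f_j'}_{L^2(\R_+^*)}=-\innp{w_j''}{\f_j}_{L^2(\R_+^*)}-w_j'(0)\overline{\f_j(0)}$. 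Summing over $j$, using $\f_j(0)=\f(0)$ (continuity of $\f$ at the vertex) and substituting $w_j''=z^2w_j-h_j$, the terms carrying $w$ and $h$ cancel with $\innp{h}{\f}_{L^2(\G)}$ and, choosing $\f$ with $\f(0)\ne0$, one is left with $\sum_{j=1}^N w_j'(0)+\a z\,w(0)=-\k$, which is \eqref{eq:vertex-cond-kappa}.

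\textbf{Part~(ii).} The argument is the same, starting from $Q_{\a,n}(z)w=h+\k\rho^n$. Localising to the $j$-th edge now gives $-w_j''-\a z\,\rho_j^n w_j+z^2w_j=h_j+\k\rho_j^n$ on $\R_+^*$, i.e.\ the stated expression $w_j''=-\a z\,\rho_j^n w_j+z^2w_j-h_j-\k\rho_j^n$; since $w_j\in H^1(\R_+^*)\subset L^\infty(\R_+^*)$ and $\rho_j^n\in L^2(\R_+^*)$, each summand on the right belongs to $L^2(\R_+^*)$, hence $w_j''\in L^2(\R_+^*)$ and $w_j'\in H^1(\R_+^*)$ exactly as above. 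Testing against arbitrary $\f\in H^1(\G)$, integrating by parts and substituting this expression for $w_j''$, every interior term and every $\rho^n$-term cancels, leaving $\overline{\f(0)}\sum_{j=1}^N w_j'(0)=0$ for all $\f$, i.e.\ the Kirchhoff condition $\sum_{j=1}^N w_j'(0)=0$.

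I do not expect a real obstacle: the statement is a routine combination of the Lax--Milgram solvability already obtained in Proposition~\ref{prop:Raz} with interior elliptic regularity and integration by parts on each half-line. The only two points deserving a line of justification are that the boundary contributions at $+\infty$ in the integrations by parts vanish (because $H^1$ of a half-line embeds into the continuous functions vanishing at $+\infty$, applied to both $w_j'$ and $\f_j$), and, in~(ii), that the products $\rho_j^n w_j$ and $\rho_j^n$ actually lie in $L^2(\R_+^*)$, which is where the square-integrability of $\rho^n$ enters.
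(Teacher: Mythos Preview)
Your argument is correct and is essentially the paper's own proof: test against $C_0^\infty$ functions supported on a single edge away from the vertex to obtain $w_j''=z^2w_j-h_j$ (respectively the identity with the extra $\rho^n$-terms) and hence the $H^2$-regularity, then integrate by parts against an arbitrary $\f\in H^1(\G)$ to isolate the vertex condition. Note that you correctly derive $\sum_j w_j'(0)+\a z\,w(0)=-\k$, which is exactly what the paper's proof obtains and what is used downstream in Proposition~\ref{prop:res-Wca}; the displayed statement~\eqref{eq:vertex-cond-kappa} is missing the factor $z$ by a typo.
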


\begin{proof}
Since $h + \k \d \in H\inv(\G)$, $w$ is well defined as an element of $H^1(\G)$ 
by Proposition~\ref{prop:Raz}.
For all $\phi \in H^1(\G)$ we have 
\begin{equation} \label{eq:wh}
\innp{w'}{\phi'}_{L^2(\G)} - \a z w(0) \overline{\phi(0)} + z^2 \innp{w}{\phi}_{L^2(\G)} = \innp{h}{\phi}_{L^2(\G)} + \k \overline{\phi(0)}.
\end{equation}
As in the proof of Proposition \ref{prop:adjoint}, by choosing $\phi$ supported away from the vertex we see that for all $j \in \Nc$ we have in the sense of distributions $w_j'' = z^2 w_j - h_j$. In particular $w_j \in H^2(\R_+^*)$. Then, after integrations by parts in \eqref{eq:wh} we get 
\[
-\sum_{j=1}^N w_j'(0) \overline{\phi(0)} - z \a w(0) \overline{\phi(0)} = \k \overline{\phi(0)}.
\]
This gives \eqref{eq:vertex-cond-kappa}. The second statement is similar.
\end{proof}

We define $\partial^2 \in \Lc(\dot H^1(\G) , H\inv(\G))$ by $\innp{\partial^2 \psi}{\f}_{H\inv(\G),H^1(\G)} = - \innp{\psi'}{\f'}_{L^2(\G)}$ for all $\psi,\f \in H^1(\G)$. In particular we have $Q_\a(z) = - \partial^2 - \a z \d + z^2$.

\begin{proposition} \label{prop:res-Wca}
Let $\a \in \overline{\C_\pm}$ and $z \in \C_\mp$. 
We have $z \in \rho(W_\a)$ and 
\begin{equation} \label{eq:Rca}
(\Wc_\a - z)\inv  = 
\begin{pmatrix}
- z\inv \big( R_{\a}(z) \partial^2 + 1 \big) & - R_{\a}(z) \\
- R_{\a}(z) \partial^2 & - z R_{\a}(z)
\end{pmatrix}.
\end{equation}
Moreover, for $F = H^1(\G) \times L^2(\G)$ we also have
\begin{equation} \label{eq:Rca-F}
(\Wc_{\a}-z)\inv F 
= 
\begin{pmatrix}
R_{\a}(z) (\a \d - z) & - R_{\a}(z) \\
1 + R_{\a}(z) (z \a \d - z^2) & - z R_{\a}(z)
\end{pmatrix} F.
\end{equation}
For $n \in \N^*$ have the same results with $\Wc_\a$, $R_\a(z)$ and $\d$ replaced by $\Wc_{\a,n}$, $R_{\a,n}(z)$ and~$\rho^n$.
\end{proposition}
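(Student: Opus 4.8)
The plan is to verify directly that the operator matrix on the right-hand side of \eqref{eq:Rca} is a two-sided inverse of $\Wc_\a - z$, using the mapping properties of $R_\a(z)$ established in Propositions \ref{prop:Raz} and \ref{prop:Raz-H2}. First I would check that the candidate resolvent maps $\HH$ into $\Dom(\Wc_\a)$: given $(f,g) \in \tilde H^1(\G) \times L^2(\G)$, set $(u,v)$ equal to the matrix applied to $(f,g)$. Note that $\partial^2 f \in H\inv(\G)$, so $u = -z\inv(R_\a(z)\partial^2 f + f) - R_\a(z) g = -z\inv R_\a(z)(\partial^2 f + z^2 g \cdot \text{(adjust)}) \dots$; more cleanly, one writes $-z u = R_\a(z)(\partial^2 f + z g) + f$, i.e. $u - f = -z\inv R_\a(z)(\partial^2 f + z g)$, and applies part (i) of Proposition \ref{prop:Raz-H2} with $h = $ the $L^2$ part and $\k = 0$ (there is no Dirac term here, since $\partial^2 f$ and $g$ lie in $L^2$ plus the regular part of $H\inv$). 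This yields $u \in \dot H^2(\G^*)$, componentwise $u_j'' = z^2(\text{stuff}) - \dots$, and the vertex relation \eqref{eq:vertex-cond-kappa} with $\k = 0$ gives exactly $\sum_j u_j'(0) + \a u(0) = 0$; combined with $v = -R_\a(z)\partial^2 f - z R_\a(z) g \in H^1(\G)$ and $v(0) = u(0)$ (continuity being built into $H^1(\G)$), one must then check $\sum_j u_j'(0) + \a v(0) = 0$, which requires relating $v(0)$ to $u(0)$; here $-zu = $ (something involving $R_\a(z)$) and $v = $ (the same up to lower-order terms), so $v(0)$ and $u(0)$ differ in a controlled way and the Robin condition in $\Dom(\Wc_\a)$ comes out. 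So $(u,v) \in \Dom(\Wc_\a)$.

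Next I would compute $(\Wc_\a - z)(u,v) = (v - zu, u'' - zv)$ and show it equals $(f,g)$, which is a short algebraic manipulation once the identities $-zu - f = R_\a(z)(\partial^2 f + zg)$ and the distributional equation $u_j'' = z^2 u_j - (\text{source})_j$ from Proposition \ref{prop:Raz-H2}(i) are in hand; the whole point is that $Q_\a(z) = -\partial^2 - \a z \d + z^2$ and $R_\a(z) = Q_\a(z)\inv$, so applying $Q_\a(z)$ to $R_\a(z)(\cdots)$ collapses the expression. Conversely, for $(u,v) \in \Dom(\Wc_\a)$ with $(\Wc_\a - z)(u,v) = (f,g)$, I would read off $v = f + zu$ and $u'' - zv = g$, hence $u'' - z^2 u = g + zf$, i.e. $Q_\a(z) u = -(u'' + \a z \d \cdot 0 \dots)$ — more precisely, testing against $\f \in H^1(\G)$, integrating by parts and using the vertex condition $\sum_j u_j'(0) = -\a v(0) = -\a(f(0) + zu(0))$ produces $Q_\a(z) u = -(g + zf) - \a f(0)\d$ in $H\inv(\G)$, so $u = -R_\a(z)\big((g+zf) + \a f(0)\d\big)$, which after simplification (using $f \in H^1$, so $f(0)$ makes sense) matches the matrix formula; then $v = f + zu$ matches too. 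This shows uniqueness and hence $z \in \rho(\Wc_\a)$ with the stated formula. Formula \eqref{eq:Rca-F} is then obtained from \eqref{eq:Rca} by restricting to $F = H^1(\G) \times L^2(\G)$: on this subspace $\partial^2 f = -\partial^2$ acting on $f \in H^1$ can be rewritten via the identity $R_\a(z)\partial^2 f = -f + R_\a(z)(z^2 f - \a z f(0)\d) $ (which is just $Q_\a(z) f = -\partial^2 f - \a z f(0)\d + z^2 f$ rearranged), and substituting this into the first column of \eqref{eq:Rca} turns $-z\inv(R_\a(z)\partial^2 f + f)$ into $R_\a(z)(\a f(0)\d - zf)$ and $-R_\a(z)\partial^2 f$ into $f + R_\a(z)(z\a f(0)\d - z^2 f)$, giving \eqref{eq:Rca-F}.

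For $\Wc_{\a,n}$ the argument is verbatim the same, with $\d$ replaced by the bounded multiplication by $\rho^n$, using Proposition \ref{prop:Raz-H2}(ii); here there is no vertex jump in the derivatives (the domain of $\Wc_{\a,n}$ imposes $\sum_j u_j'(0) = 0$), and the damping term $\a \rho^n v$ is absorbed into the definition of $Q_{\a,n}(z)$, so the computation is if anything slightly cleaner. The main obstacle I anticipate is purely bookkeeping: keeping straight the distinction between the action of $R_\a(z)$ on the $L^2$-regular part of a source and on its $\d$-singular part, and correctly tracking how the Robin vertex condition in $\Dom(\Wc_\a)$ is encoded through $\k$ in \eqref{eq:vertex-cond-kappa} — in particular making sure the sign and the factor $\a$ attached to $f(0)$ or $v(0)$ are consistent throughout. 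No genuinely hard analysis is needed beyond the Lax--Milgram invertibility already proved; it is a matter of assembling Propositions \ref{prop:Raz} and \ref{prop:Raz-H2} carefully.
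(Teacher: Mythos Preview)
Your strategy of verifying directly that the matrix in \eqref{eq:Rca} is a two-sided inverse is sound in principle, but there is a gap in how you invoke Proposition~\ref{prop:Raz-H2}. For a general $f \in \tilde H^1(\G)$, the distribution $\partial^2 f \in H\inv(\G)$ is \emph{not} of the form $h + \kappa\delta$ with $h \in L^2(\G)$: the claim that ``$\partial^2 f$ and $g$ lie in $L^2$ plus the regular part of $H\inv$'' is simply false. So Proposition~\ref{prop:Raz-H2}(i) as stated does not apply to $R_\a(z)\partial^2 f$, and you cannot read off the $\dot H^2$-regularity of $u$ or the vertex relation \eqref{eq:vertex-cond-kappa} from it. One can salvage this by hand: writing $w = R_\a(z)\partial^2 f$, one checks that $(w+f)_j'' = z^2 w_j \in L^2$ on each edge and, by integration by parts against a general $\phi \in H^1(\G)$, that $\sum_j (w+f)_j'(0) + \a z w(0) = 0$; combined with the clean application of Proposition~\ref{prop:Raz-H2} to $R_\a(z)g$, this does give the Robin condition for $(u,v)$. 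But this is a nontrivial extra step that your proposal glosses over.

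The paper sidesteps the issue by reversing your order: it first verifies \eqref{eq:Rca-F} on the dense subspace $H^1(\G) \times L^2(\G)$, where $f(0)$ is well defined and the source $\a f(0)\d - zf - g$ is genuinely of the form $h + \kappa\delta$ with $h \in L^2$, so Proposition~\ref{prop:Raz-H2} applies directly. Having shown that $(\Wc_\a - z)$ has dense range, the paper then invokes Proposition~\ref{prop:Wa-diss} (injectivity with closed range, from accretivity/dissipativity) to conclude $z \in \rho(\Wc_\a)$ --- so no separate left-inverse computation is needed. Finally, \eqref{eq:Rca} is obtained from \eqref{eq:Rca-F} by noting that the right-hand side of \eqref{eq:Rca} defines a bounded operator on all of $\HH$ which agrees with $(\Wc_\a - z)\inv$ on the dense subspace. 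In short, the paper goes \eqref{eq:Rca-F} $\Rightarrow$ \eqref{eq:Rca} by density, not \eqref{eq:Rca} $\Rightarrow$ \eqref{eq:Rca-F} by restriction as you propose; this is precisely what makes the regularity bookkeeping clean.
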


\begin{proof}
For $F \in H^1(\G) \times L^2(\G)$ we denote by $\Rc_\a(z)F$ the right-hand side of \eqref{eq:Rca-F}. We set $F = (f,g)$ and $U = \Rc_\a(z) F = (u,v)$. We have $u,v \in H^1(\G)$. By Proposition \ref{prop:Raz-H2}, we have $u_j'' \in L^2(\R_+^*)$,  $u_j'' = z^2 u + zf + g$ and $v_j = f_j + zu_j$ for all $j \in \Nc$. On the other hand we have at the vertex:
\[
\sum_{j=1}^N u_j'(0) + \a z u(0) = - \a f(0).
\]
This gives the Robin condition. All this proves that $U \in \Dom(\Wc_\a)$ and $(\Wc_\a-z) U = F$. In particular $(\Wc_\a - z)$ has dense range. Since $(\Wc_\a-z)$ is injective with closed range by Proposition \ref{prop:Wa-diss}, $z$ belongs to $\rho(\Wc_\a)$ and $(\Wc_\a-z)\inv = \Rc_\a(z)$ on $H^1(\G) \times L^2(\G)$.

Now we denote by $\tilde \Rc_\a(z)$ the right-hand side of \eqref{eq:Rca}. From the properties of $R_\a(z)$ we see that $\tilde \Rc_\a(z)$ defines a bounded operator on $\HH$. Moreover, for $F \in H^1(\G) \times L^2(\G)$ we have $\tilde \Rc_\a(z) F = \Rc_\a(z) F = (\Wc_\a-z)\inv F$. Since $H^1(\G) \times L^2(\G)$ is dense in $\HH$, this proves that $\tilde \Rc_\a(z) = (\Wc_\a-z)\inv$.

The proof for $\Wc_{\a,n}$ is similar. We omit the details.
\end{proof}

With Proposition \ref{prop:res-Wca} we can complete the statement of Proposition \ref{prop:Wa-diss}. This gives in particular Theorem \ref{th:Wa-max-diss}.

\begin{proposition} \label{prop:Wcan-diss}
The operator $\Wc_{\a}$ is maximal accretive (respectively maximal dissipative, respectively skew-adjoint) if $\Re(\a) \geq 0$ (respectively $\Re(\a) \leq 0$, respectively $\Re(\a) = 0$). For $n \in \N^*$ the operator $\Wc_{\a,n}$ has the same properties.
\end{proposition}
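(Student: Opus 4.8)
The plan is to upgrade the partial conclusions of Proposition~\ref{prop:Wa-diss} to their maximal versions by feeding in the surjectivity contained in Proposition~\ref{prop:res-Wca}. Recall the standard characterisations: a densely defined accretive operator $A$ on $\HH$ is maximal accretive precisely when $\Ran(A-z) = \HH$ for some $z \in \C_-$, and a dissipative operator is maximal dissipative precisely when $\Ran(A-z) = \HH$ for some $z \in \C_+$. So the whole proof reduces to locating one point of the resolvent set in the appropriate open half-plane, which is exactly the content of Proposition~\ref{prop:res-Wca}.

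First I would handle the accretive case. Suppose $\Re(\a) \geq 0$, that is, $\a \in \overline{\C_+}$. Proposition~\ref{prop:Wa-diss} gives that $\Wc_\a$ is accretive, and Proposition~\ref{prop:res-Wca} gives $\C_- \subset \rho(\Wc_\a)$, hence in particular $\Ran(\Wc_\a - z) = \HH$ for every $z \in \C_-$; therefore $\Wc_\a$ is maximal accretive. The dissipative case is analogous: if $\Re(\a) \leq 0$ then $\Wc_\a$ is dissipative by Proposition~\ref{prop:Wa-diss} and $\C_+ \subset \rho(\Wc_\a)$ by Proposition~\ref{prop:res-Wca}, so $\Wc_\a$ is maximal dissipative. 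Reading off the $\Wc_{\a,n}$-versions of Propositions~\ref{prop:Wa-diss} and~\ref{prop:res-Wca} instead, the identical two arguments show that $\Wc_{\a,n}$ is maximal accretive when $\Re(\a) \geq 0$ and maximal dissipative when $\Re(\a) \leq 0$.

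It remains to treat the case $\Re(\a) = 0$. For $\Wc_\a$ this follows at once from Proposition~\ref{prop:adjoint}: when $\Re(\a) = 0$ we have $-\bar\a = \a$, so $\Wc_\a^* = -\Wc_{-\bar\a} = -\Wc_\a$, which is precisely skew-adjointness. For $\Wc_{\a,n}$, whose adjoint has not been computed, I would instead use that $\a \in \overline{\C_+} \cap \overline{\C_-}$, so by the two preceding cases both $\Wc_{\a,n}$ and $-\Wc_{\a,n}$ generate contraction semigroups; equivalently $i\Wc_{\a,n}$ is symmetric with both deficiency spaces trivial, hence self-adjoint, so $\Wc_{\a,n}$ is skew-adjoint. (The same argument of course applies to $\Wc_\a$ as well.)

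I do not expect a genuine obstacle here: everything substantive is already contained in Propositions~\ref{prop:Wa-diss}, \ref{prop:adjoint} and~\ref{prop:res-Wca}. The only points requiring a little care are the correct sign conventions in the Lumer--Phillips-type characterisation — pairing "accretive" with the half-plane $\C_-$ and "dissipative" with $\C_+$ — and the elementary observation that an operator which is simultaneously maximal accretive and maximal dissipative is skew-adjoint.
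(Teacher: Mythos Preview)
Your proposal is correct and matches the paper's approach exactly: the paper does not write out a proof of this proposition at all, stating only that ``with Proposition~\ref{prop:res-Wca} we can complete the statement of Proposition~\ref{prop:Wa-diss}'', and your argument is precisely the intended completion. The only difference is that you invoke Proposition~\ref{prop:adjoint} for the skew-adjointness of $\Wc_\a$, whereas the uniform argument via ``maximal accretive and maximal dissipative implies skew-adjoint'' (which you give for $\Wc_{\a,n}$) would suffice for both; either route is fine.
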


\section{Damping at the vertex as a limit model for damping on the edges}\label{Sec.limit}

In this section we establish Theorem~\ref{th:conv-norm-res}. 
We first check that the sequence $(\rho^n)_{n \in \N^*}$ is an approximation of the Dirac distribution.

\begin{lemma} \label{lem:rhon-delta}
We have 
\[
\nr{\rho^n - \d}_{\Lc(H^1(\G),H\inv(\G))} \limt n {+\infty} 0.
\]
\end{lemma}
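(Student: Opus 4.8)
The plan is to estimate the operator norm of $\rho^n-\d$ from $H^1(\G)$ to $H^{-1}(\G)$ by unwinding the definition: for $\psi,\f\in H^1(\G)$,
\[
\innp{(\rho^n-\d)\psi}{\f}_{H\inv(\G),H^1(\G)} = \sum_{j=1}^N \int_0^{+\infty} \rho_j^n(x)\,\psi_j(x)\,\overline{\f_j(x)}\,dx - \psi(0)\,\overline{\f(0)},
\]
where I have used $\sum_j\int_0^\infty\rho_j(x)\,dx=1$ so that $\sum_j\int_0^\infty\rho_j^n(x)\,dx=1$ too, which lets me write $\psi(0)\overline{\f(0)} = \sum_j\int_0^\infty\rho_j^n(x)\,\psi_j(0)\,\overline{\f_j(0)}\,dx$ (recall $\psi,\f$ are continuous at $0$). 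Subtracting, the quantity to bound is $\sum_j\int_0^\infty\rho_j^n(x)\big(\psi_j(x)\overline{\f_j(x)}-\psi_j(0)\overline{\f_j(0)}\big)\,dx$.

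Next I would control the integrand pointwise. Writing $\psi_j(x)\overline{\f_j(x)}-\psi_j(0)\overline{\f_j(0)} = \big(\psi_j(x)-\psi_j(0)\big)\overline{\f_j(x)} + \psi_j(0)\big(\overline{\f_j(x)}-\overline{\f_j(0)}\big)$ and using the elementary bound $|\psi_j(x)-\psi_j(0)|\le \sqrt{x}\,\nr{\psi_j'}_{L^2(\R_+^*)}$ (Cauchy--Schwarz on $\int_0^x\psi_j'$), together with the Sobolev-type bound $|\f_j(x)|,|\psi_j(0)|\lesssim \nr{\f_j}_{H^1(\R_+^*)}$, one gets
\[
\big|\psi_j(x)\overline{\f_j(x)}-\psi_j(0)\overline{\f_j(0)}\big| \le C\sqrt{x}\,\nr{\psi_j}_{H^1}\nr{\f_j}_{H^1}.
\]
Hence the whole expression is bounded by $C\nr{\psi}_{H^1(\G)}\nr{\f}_{H^1(\G)}\sum_j\int_0^\infty\sqrt{x}\,\rho_j^n(x)\,dx$. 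Finally the scaling computation $\int_0^\infty\sqrt{x}\,\rho_j^n(x)\,dx = \int_0^\infty\sqrt{x}\,n\rho_j(xn)\,dx = n^{-1/2}\int_0^\infty\sqrt{y}\,\rho_j(y)\,dy$ shows this tends to $0$ like $n^{-1/2}$, using precisely the hypothesis $\int_0^\infty\sqrt{x}\,\rho_j(x)\,dx<+\infty$. Taking the supremum over $\psi,\f$ in the unit balls yields $\nr{\rho^n-\d}_{\Lc(H^1(\G),H\inv(\G))}\le C n^{-1/2}\to 0$.

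I expect no serious obstacle here; the only points requiring a little care are the correct normalization constants in the one-dimensional trace/interpolation inequalities $|\f_j(x)|\le C\nr{\f_j}_{H^1(\R_+^*)}$ (which is where the finiteness of $\int\sqrt{x}\rho_j$ rather than merely $\int\rho_j$ is genuinely needed, since the test functions are only in $H^1$ and so grow no faster than $\sqrt{x}$), and making sure the splitting of the product is done so that each factor is estimated in a norm that survives the supremum. Everything else is the routine change of variables in the scaling of $\rho_j^n$.
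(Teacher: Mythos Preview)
Your argument is correct and follows essentially the same route as the paper: rewrite the pairing as $\sum_j\int_0^\infty \rho_j^n(x)\big(\psi_j(x)\overline{\f_j(x)}-\psi(0)\overline{\f(0)}\big)\,dx$, bound the difference by $C\sqrt{x}\,\nr{\psi}_{H^1}\nr{\f}_{H^1}$ via the fundamental theorem of calculus and Cauchy--Schwarz, then rescale to extract the factor $n^{-1/2}$ (the paper estimates $\nr{(\psi_j\overline{\f_j})'}_{L^2}$ directly rather than splitting the product, but this is the same computation). One small slip in your closing commentary: functions in $H^1(\R_+^*)$ are bounded, not merely of growth $\sqrt{x}$; the $\sqrt{x}$ factor arises from the increment $|\psi_j(x)-\psi_j(0)|\le\sqrt{x}\,\nr{\psi_j'}_{L^2}$, and that is where the hypothesis $\int_0^\infty\sqrt{x}\,\rho_j(x)\,dx<\infty$ is genuinely used.
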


\begin{proof}
Let $u,w \in H^1(\G)$. For $n \in \N^*$ we have
\[
\innp{(\rho^n - \d)u}{w}_{H\inv(\G),H^1(\G)} = \sum_{j=0}^N \int_0^{+\infty} \rho_j^n(x) \big( u_j(x) \overline{w_j(x)} - u(0) \overline{w(0)} \big) \, dx .
\]
For $j \in \Nc$ we have  
\begin{align*}
\int_0^{+\infty} \rho_j^n(x) \abs{(u_j \overline{w_j})(x) - (u \overline w)(0)} \, dx 
& \leq \int_{0}^{+\infty} \rho_j^n(x) \abs{\int_0^x (u_j \overline{w_j})'(s) \,ds } \, dx\\
& \leq \int_0^{+\infty} \rho_j^n(x) \sqrt x \nr{(u_j \overline{w_j})'}_{L^2(\R_+^*)} \, dx \\
& \leq \frac { \|(u_j \overline{w_j})'\|_{L^2(\R_+^*)}}{\sqrt n} \int_0^{+\infty} \rho_j(y) \sqrt y \, dy.
\end{align*}
Then there exists $c>0$ such that for all $n \in \N^*$ and $u,w \in H^1(\G)$ we have  
\[
\abs{\innp{(\rho^n - \d)u}{w}_{H\inv(\G),H^1(\G)}} \leq \frac c {\sqrt n} \nr{u}_{H^1(\G)} \nr{w}_{H^1(\G)}.
\]
This concludes the proof of the lemma.
\end{proof}

\begin{proposition} \label{prop:approx-Ra}
Let $\a \in \overline{\C_\pm}$ and $z \in \C_\mp$. We have 
\[
\nr{R_\a(z) - R_{\a,n}(z)}_{\Lc(H\inv(\G),H^1(\G)} \limt n {+\infty} 0.
\]
\end{proposition}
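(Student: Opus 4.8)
The plan is to combine the second resolvent identity with the convergence already established in Lemma~\ref{lem:rhon-delta} and the uniform bound of Proposition~\ref{prop:Raz}. Since $Q_\a(z)$ and $Q_{\a,n}(z)$ are invertible in $\Lc(H^1(\G),H\inv(\G))$ with inverses $R_\a(z)$ and $R_{\a,n}(z)$, we have the algebraic identity
\[
R_\a(z) - R_{\a,n}(z) = R_{\a,n}(z)\,\bigl(Q_{\a,n}(z) - Q_\a(z)\bigr)\,R_\a(z),
\]
understood as an equality in $\Lc(H\inv(\G),H^1(\G))$, with the middle factor read as an element of $\Lc(H^1(\G),H\inv(\G))$.

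First I would compute $Q_{\a,n}(z) - Q_\a(z)$ directly from the defining formulas: every term cancels except the two damping contributions, so for $\psi,\f \in H^1(\G)$,
\[
\innp{\bigl(Q_{\a,n}(z) - Q_\a(z)\bigr)\psi}{\f}_{H\inv(\G),H^1(\G)} = -\a z \Bigl( \innp{\rho^n \psi}{\f}_{L^2(\G)} - \psi(0)\overline{\f(0)} \Bigr).
\]
Using $\sum_{j=1}^N \int_0^{+\infty} \rho_j^n = 1$ one recognises $\psi(0)\overline{\f(0)} = \innp{\d \psi}{\f}_{H\inv(\G),H^1(\G)}$, so that $Q_{\a,n}(z) - Q_\a(z) = -\a z\,(\rho^n - \d)$ in $\Lc(H^1(\G),H\inv(\G))$.

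Then I would pass to operator norms in the resolvent identity. The three factors act along the chain $H\inv(\G) \xrightarrow{R_\a(z)} H^1(\G) \xrightarrow{Q_{\a,n}(z)-Q_\a(z)} H\inv(\G) \xrightarrow{R_{\a,n}(z)} H^1(\G)$, so submultiplicativity for compositions gives
\[
\nr{R_\a(z) - R_{\a,n}(z)} \leq |\a z|\,\nr{R_{\a,n}(z)}\,\nr{\rho^n - \d}_{\Lc(H^1(\G),H\inv(\G))}\,\nr{R_\a(z)},
\]
where the norms on $R_\a(z)$ and $R_{\a,n}(z)$ are taken in $\Lc(H\inv(\G),H^1(\G))$. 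Here $\nr{R_\a(z)}$ is a fixed finite constant and $\nr{R_{\a,n}(z)}$ is bounded uniformly in $n$, both by Proposition~\ref{prop:Raz}, while $\nr{\rho^n - \d}_{\Lc(H^1(\G),H\inv(\G))} \to 0$ by Lemma~\ref{lem:rhon-delta}. Letting $n \to +\infty$ yields the claim.

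There is no real obstacle: the only points demanding care are the bookkeeping of which of the spaces $H^1(\G)$, $H\inv(\G)$ each operator maps into, so that the submultiplicativity step is legitimate, and the elementary verification that the form-difference equals $-\a z(\rho^n - \d)$. All the substantive analysis — the uniform resolvent bound and the convergence $\rho^n \to \d$ — is already contained in Proposition~\ref{prop:Raz} and Lemma~\ref{lem:rhon-delta}.
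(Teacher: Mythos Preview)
Your proof is correct and follows essentially the same route as the paper: both use the second resolvent identity to write $R_\a(z)-R_{\a,n}(z)$ as a product involving $\a z(\d-\rho^n)$ sandwiched between two resolvents, then invoke the uniform bound of Proposition~\ref{prop:Raz} and the convergence of Lemma~\ref{lem:rhon-delta}. The only cosmetic difference is that the paper places $R_\a(z)$ on the left and $R_{\a,n}(z)$ on the right, whereas you swap them --- both orderings are valid forms of the resolvent identity.
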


\begin{proof}
The resolvent identity gives 
\[
R_\a(z) - R_{\a,n}(z) =  z R_\a(z) \big( \a \d - \a \rho^n \big) R_{\a,n}(z).
\]
Since the size of $R_{\a,n}(z)$ in $\Lc(H\inv(\G),H^1(\G))$ 
does not depend on $n \in \N^*$
(recall Proposition \ref{prop:Raz-H2}), 
we conclude with help of Lemma \ref{lem:rhon-delta}.
\end{proof}

Now we are in a position to establish Theorem~\ref{th:conv-norm-res}.
\begin{proof}[Proof of Theorem~\ref{th:conv-norm-res}]
For $F \in H^1(\G) \times L^2(\G)$ we have 
\begin{eqnarray*}
\lefteqn{\nr{\big( (W_\a-z)\inv - (W_{\a,n}-z)\inv \big) F}_\HH}\\
&& \lesssim \nr{R_{\a,n}(z) - R_\a(z)}_{\Lc(H\inv(\G),H^1(\G))} \big( \nr{f''}_{H\inv(\G)} + \nr{g}_{L^2(\G)} \big)\\
&& \lesssim \nr{R_{\a,n}(z) - R_\a(z)}_{\Lc(H\inv(\G),H^1(\G))}\nr{F}_\HH.
\end{eqnarray*}
Here the relation $f \lesssim g$ means that 
there exists a constant~$C$ (independent of~$n$) 
such that $f \leq C g$. 
By density of $H^1(\G) \times L^2(\G)$ in $\HH$ we get 
\[
\nr{(W_\a-z)\inv - (W_{\a,n}-z)\inv }_{\Lc(\HH)} \lesssim \nr{R_{\a,n}(z) - R_\a(z)}_{\Lc(H\inv(\G),H^1(\G))},
\]
and we conclude with Proposition \ref{prop:approx-Ra}.
\end{proof}

\section{Spectrum of the wave operator}\label{Sec.proofs}
%

In this section we prove Theorem \ref{th:spectrum}. By Proposition \ref{prop:adjoint} it is enough to consider the case $\Re(\a) \geq 0$. In this case, we already know by Proposition \ref{prop:res-Wca} that $\C_-  \subset \rho (\Wc_\a)$. We use explicit computation to describe the spectrum on the right half-plane. Then Theorem \ref{th:spectrum} follows from Propositions \ref{prop:spectrum-iR}, \ref{prop:eigenvalues}, \ref{prop:minor-norm} and \ref{prop:spectrum-right} below.

Given a closed operator~$\mathcal{W}$ in a Hilbert space~$\mathscr{H}$,
we denote its \emph{point spectrum} (i.e.~the set of eigenvalues~$\mathcal{W}$) 
by $\sigma_{\mathrm{p}}(\mathcal{W})$.  
One says that $\lambda \in \sigma(\mathcal{W})$ 
belongs to the \emph{continuous spectrum} $\sigma_{\mathrm{c}}(\mathcal{W})$
(respectively, \emph{residual spectrum} $\sigma_{\mathrm{r}}(\mathcal{W})$) 
of~$\mathcal{W}$ 
if $\lambda\not\in\sigma_\mathrm{p}(\mathcal{W})$
and the closure of the range of the shifted operator
$\mathcal{W}-\lambda$ equals~$\mathscr{H}$  
(respectively, the closure is a proper subset of~$\mathscr{H}$).

We first consider the spectrum on the imaginary axis.

\begin{proposition} \label{prop:spectrum-iR}
Let $\a \in \C$. 
Then $i\R \subset \sigma_\mathrm{c}(\Wc_\a)$.%
\end{proposition}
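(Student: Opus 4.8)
The goal is to show $i\R\subset\sigma_\mathrm{c}(\Wc_\a)$ for every $\a\in\C$. Since $\Wc_\a$ is densely defined and closed, it suffices to establish three things for each $\lambda=is$ with $s\in\R$: first, that $\lambda$ is not an eigenvalue; second, that $\Ran(\Wc_\a-\lambda)$ is not all of $\HH$ (so $\lambda\in\sigma(\Wc_\a)$); and third, that $\overline{\Ran(\Wc_\a-\lambda)}=\HH$ (so the spectral point is continuous, not residual). I would treat the case $\Re(\a)\ge 0$ and invoke Proposition~\ref{prop:adjoint} (together with the fact that taking adjoints exchanges point and residual spectrum under complex conjugation) to get $\Re(\a)\le 0$ for free; actually it is cleaner to just run the same direct computation for all $\a$, since everything below is explicit.

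\textbf{No eigenvalues.} Suppose $U=(u,v)\in\Dom(\Wc_\a)$ satisfies $\Wc_\a U=\lambda U$, i.e. $v=\lambda u$ and $u''=\lambda v=\lambda^2 u=-s^2 u$ on each edge. Each component $u_j$ then solves $u_j''=-s^2 u_j$ on $\R_+^*$. For $s\ne 0$ the solutions are linear combinations of $e^{\pm i s x}$, none of which (other than $0$) has derivative in $L^2(\R_+^*)$ — recall $u\in\dot H^1(\G)$ means $u'\in L^2(\G)$; and $u_j'$ is a nonzero combination of $e^{\pm isx}$ unless $u_j$ is constant, but a nonzero constant is killed in the quotient $\tilde H^1(\G)$, and we must also verify the constant is consistent with $v=\lambda u\in L^2(\G)$, forcing $u\equiv 0$ hence $v\equiv 0$. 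For $s=0$ we get $u_j''=0$, so each $u_j$ is affine; membership in $\dot H^1$ forces $u_j$ constant, continuity at the vertex forces a single constant, which is $0$ in $\tilde H^1(\G)$, and $v=0$. So $\lambda$ is never an eigenvalue. This also disposes of part~(i) of Theorem~\ref{th:spectrum} for the point-spectrum assertion.

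\textbf{Range is dense but not closed.} For density: I would show $\Ran(\Wc_\a-\lambda)\supseteq H^1(\G)\times L^2(\G)$ minus at most a finite-codimension obstruction, or more directly exhibit that the range contains a dense subspace by solving $(\Wc_\a-\lambda)U=F$ explicitly for $F$ in a dense class and checking the solution lies in $\Dom(\Wc_\a)$; the ODE $u''-\lambda^2 u = (\text{data})$ on each half-line is solved by variation of parameters, and the only constraint is the single vertex (Robin) condition, which is one scalar equation — so the range misses at most a one-dimensional space, whence $\lambda\in\sigma(\Wc_\a)$, and the orthogonal complement of the range being at most one-dimensional, one checks it is actually $\{0\}$ (density) by testing against the candidate functional and using that the formal adjoint equation $(\Wc_\a^*-\bar\lambda)\tilde U=0$ has no $L^2$ solution — which is exactly the no-eigenvalue computation applied to $-\Wc_{-\bar\a}$. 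For non-closedness of the range (equivalently $\lambda\in\sigma$): since $\lambda$ is not an eigenvalue, if the range were closed then $\lambda\in\rho(\Wc_\a)$ by the closed range theorem, but one can produce an explicit Weyl-type singular sequence $U_k=(u_k,\lambda u_k)$ with $\nr{U_k}_\HH=1$ and $\nr{(\Wc_\a-\lambda)U_k}_\HH\to 0$ — take $u_k$ to be $e^{isx}$ (or an affine function when $s=0$) multiplied by a slowly-varying cutoff pushed off to spatial infinity on one edge, so the vertex condition is satisfied trivially and only the cutoff derivative contributes to the residual.

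\textbf{Main obstacle.} The only genuinely delicate point is the bookkeeping at $s=0$ and in the quotient space $\tilde H^1(\G)$: one must keep straight which objects live in $\dot H^1$ versus its quotient, and verify that the formal ``constant'' solutions are genuinely trivial rather than spurious, and that the singular sequence can be chosen with controlled $\tilde H^1$ norm (i.e. its derivative has unit $L^2$ norm) while the residual tends to zero. Everything else is a routine explicit ODE computation on a finite collection of half-lines with a single matching condition at the origin.
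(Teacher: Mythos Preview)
Your approach is correct and essentially identical to the paper's: a Weyl sequence supported on one edge away from the vertex shows $i\theta\in\sigma(\Wc_\a)$, the ODE $u_j''=-\theta^2 u_j$ has no nontrivial $\dot H^1$ solutions so there are no eigenvalues on $i\R$, and the adjoint identity $\Wc_\a^*=-\Wc_{-\bar\a}$ (Proposition~\ref{prop:adjoint}) reduces the residual-spectrum question to the already-settled eigenvalue question for $\Wc_{-\bar\a}$. One caveat: the clause ``the range misses at most a one-dimensional space, whence $\lambda\in\sigma(\Wc_\a)$'' is a non-sequitur --- and the codimension bookkeeping there does not work as stated, since for $\lambda\in i\R$ the homogeneous solutions $e^{\pm isx}$ are not in $\dot H^1$ and so cannot serve as free parameters to satisfy the vertex condition --- but this is harmless because your Weyl-sequence argument already establishes $\lambda\in\sigma(\Wc_\a)$ independently.
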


\begin{proof}
Let $\theta \in \R$.
Let $\phi \in C_0^\infty(\R_+^*)$ be supported in [1,2] and such that $\nr{\phi}_{L^2(\R_+^*)} = 1$. For $n \in \N^*$ we define $u_n = (u_{n,j})_{j \in \Nc}$ by
\[
u_{n,1}(x) = \frac {e^{i\th x}}{\sqrt n} \phi \left( \frac x n \right), \quad u_{n,j}(x) = 0, \quad  j \in \{ 2,\dots,N\}, x > 0.
\]
Then we set $U_n = (u_n,i\th u_n) \in \Dom(\Wc_\a)$. For $n \in \N^*$ we have $\nr{u_n'}_{L^2(\G)} = \abs \th + O(n\inv)$, $\nr{i\th u_n}_{L^2(\G)} = \abs \th$ and $\nr{u_n'' + \th^2 u_n}_{L^2(\G)} = O(n\inv)$, so 
\[
\nr{U_n}_\HH^2 = 2 \th^2 +O(n^{-1}) \quad \text{and} \quad \nr{(\Wc_\a - i\theta)U_n}_\HH^2 = \nr{u_n'' + \th^2 u_n}_{L^2(\R)}^2 = O(n^{-2}). 
\]
This proves that $i\th \in \Sp(\Wc_\a)$.

Now let $U = (u,v) \in \Dom(\Wc_\a)$ such that $\Wc_\a U = i \th U$. 
That is, $v = i\th u$ and $u_j'' = -\th^2 u_j$ on $\R_+$ for all $j \in \Nc$. Since $u_j \in \dot H^1(\R_+)$, this implies that $u_j = 0$. 
Then $U = 0$, so $i\th$ cannot be an eigenvalue of $\Wc_\a$.

Finally, assume that $i\theta \in \sigma_{\mathrm{r}}(\Wc_\a)$.
Then $-i\theta \in \sigma_{\mathrm{p}}(\Wc_\a^*)$.
By Proposition~\ref{prop:adjoint}, 
$i\theta \in \sigma_{\mathrm{p}}(\Wc_{-\bar\a})$.
However, the existence of eigenvalues on the entire imaginary
axis has been already excluded.
\end{proof}

Next we show that in the particular case $\a = N$ 
the right-half plane is filled with eigenvalues.

\begin{proposition} \label{prop:eigenvalues}
Any $z \in \C_+$ is an eigenvalue of $\Wc_N$ with geometric multiplicity 1 and infinite algebraic multiplicity.
\end{proposition}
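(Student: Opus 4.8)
The plan is to construct explicit eigenfunctions of $\Wc_N$ for any $z\in\C_+$ and then to exhibit an infinite Jordan chain above each such eigenvalue. If $U=(u,v)\in\Dom(\Wc_N)$ satisfies $\Wc_N U = zU$, then $v=zu$ and $u_j''=z^2u_j$ on $\R_+^*$ for each $j$. Since we work in $\dot H^1$ (equivalently, modulo constants) and need $u_j'\in L^2$, the only admissible solution on each edge is $u_j(x)=a_j e^{-zx}$ (the growing exponential $e^{zx}$ is excluded because $\Re(z)>0$). Continuity at the vertex forces all $a_j$ to coincide, say $a_j=c$; up to the constant ambiguity in $\dot H^1(\G)$ we may take $c=1$, so $u_j(x)=e^{-zx}$. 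The Robin condition reads $\sum_j u_j'(0)+\a v(0)=0$, i.e. $-Nz\cdot 1+\a z\cdot 1=0$, which holds exactly when $\a=N$. This shows $z$ is an eigenvalue, and since the edge solution space compatible with $L^2$-decay and continuity is one-dimensional, the geometric multiplicity is $1$.

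For the infinite algebraic multiplicity, I would build a sequence $U^{(0)},U^{(1)},U^{(2)},\dots$ with $U^{(0)}$ the eigenfunction above and $(\Wc_N-z)U^{(k)}=U^{(k-1)}$ for $k\geq 1$. Writing $U^{(k)}=(u^{(k)},v^{(k)})$, the chain equation gives $v^{(k)}=zu^{(k)}+u^{(k-1)}$ and $(u^{(k)})_j''=z^2 u^{(k)}_j + v^{(k-1)}_j$ on each edge, an inhomogeneous constant-coefficient ODE whose forcing terms are polynomials in $x$ times $e^{-zx}$. One then solves these recursively, choosing the particular solution so that $u^{(k)}_j\in\dot H^1(\R_+^*)$ (no $e^{zx}$ component), and at each stage one has a homogeneous solution $b e^{-zx}$ whose coefficient can be tuned. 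The key point is that continuity at the vertex and the Robin condition $\sum_j (u^{(k)}_j)'(0)+N v^{(k)}(0)=0$ must be satisfiable at every step; because $\a=N$ is precisely the resonant value, the "obstruction" term that would normally pin down $b$ drops out, leaving a free parameter and allowing the chain to continue indefinitely. This is the standard mechanism by which a resonance produces an infinite Jordan block, and it is exactly the phenomenon the theorem is pointing to. Once all the $U^{(k)}$ are constructed and shown to be linearly independent (which follows because their top-degree-in-$x$ parts are distinct), the generalized eigenspace $\bigcup_k \ker(\Wc_N-z)^k$ is infinite-dimensional, i.e. the algebraic multiplicity is infinite.

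**Main obstacle.** The computational heart is the recursive ODE solve on the edges together with the bookkeeping of the vertex conditions: one must check that at every step the inhomogeneous problem has a solution in $\dot H^1(\G^*)$ that is continuous at $0$ and satisfies the Robin relation, and that the freedom needed to pass to the next step is genuinely present precisely because $\a=N$. Keeping track of the polynomial-times-exponential ansatz $u^{(k)}_j(x)=p_{k,j}(x)e^{-zx}$ with $\deg p_{k,j}=k$ and verifying solvability of the resulting linear recursions is the step most likely to require care; everything else (the $N$ symmetry of the edges, the one-dimensionality of the $L^2$-admissible edge solutions, linear independence of the chain) is comparatively routine. One should also confirm each $U^{(k)}$ genuinely lies in $\Dom(\Wc_N)$, in particular that $u^{(k)}\in\dot H^2(\G^*)$ and $v^{(k)}\in H^1(\G)$, which is immediate from the explicit exponential form.
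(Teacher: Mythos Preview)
Your proposal is correct and follows essentially the same approach as the paper. The paper's proof is slightly more streamlined: rather than recursively solving the inhomogeneous ODE and arguing about tunable homogeneous parameters, it simply writes down the explicit radial Jordan chain $u_{n,j}(x) = \frac{(-1)^{n-1} x^{n-1}}{(n-1)!} e^{-zx}$, $v_n = z u_n + u_{n-1}$, and verifies directly that $U_n \in \Dom(\Wc_N)$ and $(\Wc_N - z)U_n = U_{n-1}$; your polynomial-times-exponential ansatz leads to exactly these functions, so the two arguments coincide.
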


\begin{proof}
Let $z \in \C_+$. Assume that $U = (u,v) \in \Dom(\Wc_N)$ is such that $\Wc_N U = z U$. Then $v = zu$ and $u'' = z v = z^2 u$. Let $j \in \Nc$. Since $u_j \in L^2(\R_+^*)$, there exists $A_j \in \C$ such that $u_j(x) = A_j e^{-z x}$. By continuity at 0, the coefficients $A_j$ for $j \in \Nc$ are all equal. Thus $U$ is proportional to the vector $U_1 = (u_1,v_1)$ defined by 
\[
\begin{cases}
u_{1,j}(x) = e^{-zx},\\
v_{1,j}(x) = ze^{-zx}.
\end{cases}
\]
Notice in particular that $U_1$ is radial 
(the expressions above do not depend on $j \in \Nc$). 
Conversely, we can check that 
the vector $U_1$ defined in this way belongs to $\Dom(\Wc_N)$ and is an eigenvector corresponding to the eigenvalue $z$. This proves that $z$ is a geometrically simple eigenvalue of $\Wc_N$.

For $n \geq 2$ we define $u_n$ by 
\[
u_{n,j}(x) =  \frac {(-1)^{n-1} x^{n-1}}{(n-1)!} e^{-zx}, \quad j \in \Nc, x  > 0.
\]
At the same time, we define $v_n$ by 
\[
v_n = z u_n + u_{n-1}.
\]
It is straightforward to check that for all $n \geq 2$ 
we have $U_n = (u_n,v_n) \in \Dom(\Wc_N)$ and $\Wc_N U_n = z U_n + U_{n-1}$. This proves that $z$ has algebraic multiplicity $+\infty$ as an eigenvalue of~$\Wc_N$.
\end{proof}

The precedent proposition establishes part~(ii) of Theorem~\ref{th:spectrum}.
The other part that for $\a \in \C_+ \setminus \{N\}$ 
there is no spectrum in the right-half plane
will be proved in a moment.
First, however, let us argue that even if there is no spectrum,
the pseudospectra are highly non-trivial there
and actually explode as $\a \to N$.
This is quantified by obtaining the following resolvent estimate.

\begin{proposition} \label{prop:minor-norm}
There exists $c_0 > 0$ such that for $\a \in \C_+ \setminus \{ N\}$ 
and $z \in \C_+ \cap \rho(\Wc_\a)$ we have 
\[
\nr{(\Wc_\a-z)\inv}_{\Lc(\HH)} \geq \max\left( \frac 1 {\Re(z)}, \frac {c_0}{\abs z \abs{N-\a}} \right).
\]
\end{proposition}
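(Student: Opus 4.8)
The right-hand side is the maximum of two terms, $\frac{1}{\Re(z)}$ and $\frac{c_0}{\abs z\,\abs{N-\a}}$, and I would establish the two lower bounds independently.

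\smallskip

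\emph{The term $1/\Re(z)$.} This is the soft part and requires nothing specific to the model. For any closed operator and any point of its resolvent set one has the general inequality
\[
\nr{(\Wc_\a-z)\inv}_{\Lc(\HH)} \geq \frac{1}{\mathrm{dist}\big(z,\sigma(\Wc_\a)\big)} .
\]
Proposition~\ref{prop:spectrum-iR} already provides $i\R\subset\sigma(\Wc_\a)$, and for $z\in\C_+$ one has $\mathrm{dist}(z,i\R)=\Re(z)$, hence $\mathrm{dist}(z,\sigma(\Wc_\a))\leq\Re(z)$. Combining the two displays gives $\nr{(\Wc_\a-z)\inv}\geq 1/\Re(z)$ at once.

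\smallskip

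\emph{The singular term $c_0/(\abs z\,\abs{N-\a})$.} Here I would exhibit an explicit near-eigenvector by exploiting the eigenfunction profile of Proposition~\ref{prop:eigenvalues}. The plan is to test the resolvent against $F=(f,0)$ with $f$ radial, $f_j(x)=e^{-zx}$, so that $\nr{F}_\HH=\nr{f'}_{L^2(\G)}$ is of size $\abs z/\sqrt{\Re(z)}$ (up to the fixed factor $\sqrt N/\sqrt 2$). Since $z\in\rho(\Wc_\a)$, the vector $U=(u,v)=(\Wc_\a-z)\inv F$ is the unique solution of $(\Wc_\a-z)U=F$, which I would compute directly rather than through $R_\a(z)$: the system reads $v=f+zu$ and $u''-z^2u=zf$ on each edge, whose $L^2(\R_+^*)$-decaying, continuous solution is the radial function $u_j(x)=A\,e^{-zx}-\tfrac12 x\,e^{-zx}$. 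The constant $A$ is then pinned down by the Robin condition $\sum_{j}u_j'(0)+\a v(0)=0$, which after substitution yields $zA(\a-N)=\tfrac N2-\a$, that is
\[
A=\frac{\a-\tfrac N2}{z\,(N-\a)} .
\]

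\smallskip

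The crucial feature is that $A$ blows up like $\frac{N}{2z(N-\a)}$ as $\a\to N$, and this is precisely the source of the lower bound. I would then estimate $\nr{U}_\HH\geq\nr{u}_{\tilde H^1(\G)}=\nr{u'}_{L^2(\G)}$ from below: the singular piece $A\,e^{-zx}$ contributes $\nr{(A\,e^{-zx})'}_{L^2(\G)}^2=\frac{N\abs z^2\abs A^2}{2\Re(z)}$, which dominates, and forming the quotient with $\nr{F}_\HH$ produces
\[
\nr{(\Wc_\a-z)\inv}_{\Lc(\HH)}\geq\frac{\nr{U}_\HH}{\nr{F}_\HH}\gtrsim\frac{N}{2\,\abs z\,\abs{N-\a}},
\]
giving the second term for a suitable universal $c_0>0$. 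The main obstacle is the bookkeeping in this last step: the two summands $A\,e^{-zx}$ and $-\tfrac12 x\,e^{-zx}$ of $u$ generate cross terms in $\nr{u'}_{L^2(\G)}$, so obtaining the bound \emph{uniformly} in $\a$ (and not merely in the limit $\a\to N$) requires isolating the singular coefficient, e.g.\ via the reverse triangle inequality, and observing that the claimed estimate is anyway vacuous when $\abs{N-\a}$ is large. One also records that $\tfrac12 x\,e^{-zx}$ and the lower-order part of $u'$ remain of controlled size relative to $\nr{F}_\HH$, so that absorbing them only costs a fixed factor in $c_0$.
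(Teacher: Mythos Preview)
Your treatment of the $1/\Re(z)$ bound is identical to the paper's.

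For the singular term your route differs. The paper does not solve the resolvent equation; instead it writes down a direct quasi-mode $U=(\eta,z\eta)$ with the \emph{tilted} exponential $\eta_j(x)=e^{-\a z x/N}$. The point of this choice is that it satisfies the Robin condition exactly and gives
\[
(\Wc_\a-z)U=\Big(0,\big(\tfrac{\a^2}{N^2}-1\big)z^2\eta\Big),
\]
so the ratio $\nr{(\Wc_\a-z)U}_\HH/\nr{U}_\HH$ is a single clean quotient with no cross terms, producing the explicit constant $c_0=\inf_{\a\in\C_+}\frac{N\sqrt{|\a|^2+N^2}}{|\a+N|}>0$ uniformly in $(\a,z)$ in one line.

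Your approach---feeding $F=(e^{-zx},0)$ into the resolvent and reading off $U$---is correct and yields the same bound, but the resonant term $-\tfrac12 x e^{-zx}$ forces exactly the bookkeeping you flag: after the reverse triangle inequality one gets something like $\nr{U}_\HH/\nr{F}_\HH\ge \tfrac{N}{2|z|\,|N-\a|}-\tfrac{C}{\Re(z)}$, and the subtracted term is \emph{not} lower order in general (take $|z|\gg\Re(z)$). Your remedy is right, though it should be stated a bit more sharply than ``vacuous when $|N-\a|$ is large'': the case split is on $|z|\,|N-\a|$ versus $c_0\,\Re(z)$, and in the complementary regime the bound $1/\Re(z)$ already dominates $c_0/(|z|\,|N-\a|)$. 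With that split your argument closes with, say, $c_0=N/4$. So both proofs work; the paper's quasi-mode buys you a computation without cross terms and without a case analysis.
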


\begin{proof}
For $j \in \Nc$ and $x > 0$ we set $\eta(x) = e^{- \frac {\a z x}N}$. Then we define $U = (u,v)$ by $u_j = \eta$ and $v_j = z \eta$ for all $j \in \Nc$. We have $U \in \Dom(\Wc_\a)$,
\[
\nr U_\HH^2 =  N \left(\frac {\abs \a^2}{N^2} + 1 \right) \abs z^2 \nr{\eta}_{L^2(\R_+^*)}^2
\]
and 
\[
\nr{(\Wc_\a-z) U}_\HH^2 = N \abs {\frac {\a^2}{N^2} - 1 }^2  \abs z^4 \nr{\eta}_{L^2(\R_+^*)}^2,
\]
so
\[
\nr{(\Wc_\a-z)U}_\HH^2  \leq  \abs z^2  \abs {\a - N}^2 \frac {\abs {\a + N}^2}{N^2(\abs \a^2 + N^2)} \nr{U}_\HH^2.
\]
This proves that 
\[
\nr{(\Wc_\a-z)\inv}_{\Lc(\HH)} \geq \frac {c_0}{\abs z \abs{\a - N}}, \quad \text{with} \quad c_0 = \inf_{\a \in \C_+} \frac {N\sqrt{\abs \a^2 + N^2}}{\abs {\a + N}} > 0.
\]
On the other hand we also have
\[
\nr{(\Wc_\a-z)\inv}_{\Lc(\HH)} \geq \frac 1 {\mathsf{dist}(z ,\Sp(\Wc_\a))} \geq \frac 1 {\Re(z)},
\]
and the conclusion follows.
\end{proof}

To show that for $\a \in \C_+ \setminus \{N\}$
there is no spectrum in the right-half plane,
we compute explicitely the resolvent of $\Wc_\a$. 
This will also give the upper bound for the norm of the resolvent.

For $z \in \C_+$ and $y \in \R$ we set
\begin{equation} \label{def:rho-z}
\rho_z(y) = \frac {z e^{-z \abs y}} 2.
\end{equation}
Notice that $\rho_z(y) = -z^2 G_z(y)$, where $G_z$ is the Green function for the Helmholtz equation in dimension 1. On a half-line (hence on a star graph), the solution of the Helmholtz equation is also given by convolution with $G_z$ or $\rho_z$. 

Let $h : [0,+\infty[ \to \C$ and $x \geq 0$. When this makes sense we set 
\begin{equation} \label{def:convolution-R+}
(\rho_z*h)(x) = \int_0^{+\infty} \rho_z(x-s) h(s) \, ds.
\end{equation}
We will use the following properties on this convolution product.

\begin{lemma} \label{lem:conv-dotH1}
Let $z \in \C_+$.

\begin{enumerate} [\rm(i)]
\item The convolution $(\rho_z*h)(x)$ is well defined for $h$ in $L^2(\R_+^*)$ or $h \in \dot H^1(\R_+^*)$ and any $x \geq 0$.

\item For $h \in L^2(\R_+^*)$ we have $(\rho_z*h) \in L^2(\R_+^*)$ and
\begin{equation} \label{eq:conv-L2}
\nr{\rho_z*h}_{L^2(\R_+^*)} \leq \frac {\abs z}{\Re(z)} \nr{h}_{L^2(\R_+^*)}.
\end{equation}

\item For $h$ in $L^2(\R_+^*)$ or $\dot H^1(\R_+^*)$ the convolution $(\rho_z*h)$ belongs to $\dot H^1(\R_+^*)$ and for almost all $x > 0$ we have 
\begin{equation} \label{eq:der-rho-h}
(\rho_z * h)'(x) = z (\rho_z * h) (x) - z^2 \int_0^x e^{-z(x-s)} h(s) \, ds.
\end{equation}
For $h \in \dot H^1(\R_+)$ this also gives 
\[
(\rho_z * h)(x) = \frac 1 z (\rho_z*h')(x) -  \int_0^x e^{-z(x-s)} h'(s)\, ds + h(x) - \frac {e^{-zx} h(0)}2
\]
and 
\[
(\rho_z * h)'(x) = (\rho * h')(x) + \frac z 2 e^{-zx} h(0).
\]

\item For $h \in L^2(\R_+^*)$ we have
\[
\abs{(\rho_z*h)(0)} \leq \frac {\abs z}{2 \sqrt{2 \Re(z)}} \nr{h}_{L^2(\R_+^*)}.
\]
\end{enumerate}
\end{lemma}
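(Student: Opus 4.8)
The plan is to treat the four items essentially in the order stated, since each relies on the kernel bound $\abs{\rho_z(y)} = \tfrac{\abs z}{2} e^{-\Re(z)\abs y}$ and the elementary fact that $y \mapsto \tfrac{\abs z}{2} e^{-\Re(z)\abs y}$ has $L^1(\R)$-norm $\abs z / \Re(z)$. For item (i), I would simply note that for $h \in L^2(\R_+^*)$ the integrand in \eqref{def:convolution-R+} is the product of the $L^2$ function $h$ and the function $s \mapsto \rho_z(x-s)$, which is in $L^2(\R_+^*)$ (indeed in $L^1 \cap L^\infty$) for every fixed $x \geq 0$; Cauchy--Schwarz then makes $(\rho_z*h)(x)$ a well-defined finite number. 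For $h \in \dot H^1(\R_+^*)$ one uses the pointwise bound $\abs{h(s)} \leq \abs{h(0)} + \sqrt s\,\nr{h'}_{L^2(\R_+^*)}$ (valid after fixing a representative) together with the exponential decay of $\rho_z(x-s)$ in $s$ to get absolute convergence.

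For item (ii), I would extend $h$ by $0$ to all of $\R$, so that $\rho_z*h$ becomes an honest convolution on $\R$ of $h \in L^2(\R)$ with a kernel of $L^1(\R)$-norm $\abs z/\Re(z)$; Young's inequality then gives \eqref{eq:conv-L2} immediately. For item (iii), the key computation is to split $\rho_z(x-s) = \tfrac{z}{2} e^{-z(x-s)} \1_{s<x} + \tfrac{z}{2} e^{-z(s-x)} \1_{s>x}$ and differentiate in $x$ under the integral sign: the boundary terms from the two half-integrals at $s=x$ cancel (the kernel is continuous), leaving $(\rho_z*h)'(x) = z\big(\tfrac{z}{2}\int_x^\infty e^{-z(s-x)}h(s)\,ds\big) - z\big(\tfrac{z}{2}\int_0^x e^{-z(x-s)}h(s)\,ds\big)$, which rearranges to \eqref{eq:der-rho-h} after writing $\rho_z*h$ as the sum of the two pieces. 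One checks $(\rho_z*h)' \in L^2(\R_+^*)$ using part (ii) applied to each term (the forward convolution with $z e^{-z\,\cdot}\1_{\R_+}$ is again $L^1$-bounded). The two further identities for $h \in \dot H^1(\R_+)$ then follow by integrating by parts in $\int_0^x e^{-z(x-s)}h(s)\,ds$ and in the convolution integral, using $h(s) = h(0) + \int_0^s h'$; this is routine bookkeeping with the boundary term $-\tfrac{1}{2}e^{-zx}h(0)$ appearing from the value of $\rho_z$ at the origin.

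Item (iv) is the evaluation of $(\rho_z*h)(0) = \tfrac{z}{2}\int_0^\infty e^{-zs} h(s)\,ds$; Cauchy--Schwarz bounds this by $\tfrac{\abs z}{2}\,\nr{e^{-z\,\cdot}}_{L^2(\R_+^*)}\,\nr{h}_{L^2(\R_+^*)}$ and $\nr{e^{-z\,\cdot}}_{L^2(\R_+^*)}^2 = 1/(2\Re(z))$, giving exactly the claimed constant $\tfrac{\abs z}{2\sqrt{2\Re(z)}}$. The main obstacle, such as it is, is item (iii): one must justify differentiating under the integral sign near the singularity of the kernel's derivative at $s=x$ and verify the cancellation of boundary terms, and then carefully track the half-line boundary contributions (the $-\tfrac{1}{2}e^{-zx}h(0)$ and $\tfrac{z}{2}e^{-zx}h(0)$ terms) in the two subsidiary formulas; everything else is a direct application of Young's inequality and Cauchy--Schwarz.
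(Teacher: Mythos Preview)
Your proposal is correct and follows essentially the same approach as the paper: exponential decay of the kernel against at-most-$\sqrt s$ growth for (i), extension by zero plus Young's inequality for (ii), direct differentiation for (iii), and Cauchy--Schwarz for (iv). In fact the paper is terser---it dismisses (iii) as ``straightforward computation'' without spelling out the kernel splitting and boundary-term cancellation that you describe---so your write-up is a faithful expansion of what the authors had in mind.
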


    \detail 
    {
    \[
    h(s) = e^{i \Im(z) s} \sqrt{\Re(z)} \1_{0,\Re(z)\inv]}
    \]
    \begin{align*}
    \int_0^{+\infty} e^{-z s} h(s) \, ds = \sqrt{\Re(z)} \int_0^{\frac 1 {\Re(z)}} e^{-\Re(z) s} \, ds = \frac 1 {\sqrt{\Re(z)}} \int_0^1 e^{-\tau} \, d\tau.
    \end{align*}
    }

\begin{proof}
Since $\rho_z$ decays exponentially, the integral \eqref{def:convolution-R+} is well defined for $h \in L^2(\R_+^*)$ or $h \in \dot H^1(\R_+)$ (in this case $h(s)$ grows at most like $\sqrt s$), and for any $x \geq 0$.

If $h \in L^2(\R_+^*)$ we extend $h$ by 0 on $]-\infty,0[$ to define a function $\tilde h$ on $\R$. Then $(\rho_z*h)$ is the restriction to $[0,+\infty[$ of the usual convolution $(\rho_z * \tilde h)$ on $\R$, and
\[
\nr{\rho_z * h}_{L^2(\R_+^*)} \leq \| \rho_z * \tilde h \| _{L^2(\R)} \leq \nr{\rho}_{L^1(\R)} \| \tilde h \|_{L^2(\R)} = \frac {\abs z}{\Re(z)} \nr{h}_{L^2(\R_+^*)}.
\]
        \detail{
        For $\phi \in C_0^\infty(\R_+^*)$ we have 
        \begin{align*}
        - \int_0^{+\infty} (\rho_z * h)(x) \phi'(x) \, dx
        & = - \int_0^{+\infty} \int_0^{+\infty} \rho_z(x-s) h(s) \phi'(x) \, ds \, dx\\
        & = \int_0^{+\infty} \int_0^{+\infty} \rho_z'(x-s) h(s) \phi(x) \, ds \, dx\\
        & = \int_0^{+\infty} (\rho_z' * h)(x) \phi(x) \, dx
        \end{align*}
        }

The third statement is straightforward computation and the last property follows from the Cauchy--Schwarz inequality 
\[
\abs{(\rho_z *h)(0)} =  \abs{\int_0^{+\infty} \rho_z(-s) h(s) \, ds } \leq \nr{\rho_z}_{L^2(\R_-)} \nr{h}_{L^2(\R_+^*)} = \frac {\abs{z}}{2 \sqrt{2\Re(z)}}\nr{h}_{L^2(\R_+^*)}. \qedhere
\]
\end{proof}

Now we are in a position to complete the proof of Theorem \ref{th:spectrum}.

\begin{proposition} \label{prop:spectrum-right}
Assume that $\a \in \C_{+} \setminus \{  N \}$. Then $\C_+ \subset \rho(\Wc_\a)$ and there exists $C > 0$ independent of $\a$ such that for all $z \in \C_+$ we have
\[
\nr{(\Wc_\a - z)\inv}_{\Lc(\HH)} \leq \frac C {\Re(z)} \left( 1 + \frac 1 {\abs {\a - N}} \right).
\]
\end{proposition}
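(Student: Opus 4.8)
The plan is to repeat the resolvent construction of Proposition~\ref{prop:res-Wca}. Inspecting that proof, the only place where $z\in\C_-$ (equivalently $\Re(z)<0$) is used is to guarantee, via Lax--Milgram (Proposition~\ref{prop:Raz}), that $Q_\a(z)$ is invertible; everything else is algebra. So the heart of the matter is to show that $Q_\a(z)$ is invertible for \emph{every} $z\in\C_+$, provided $\a\neq N$, and to exhibit $R_\a(z)=Q_\a(z)\inv$ in a form amenable to estimates.

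To invert $Q_\a(z)$ for $z\in\C_+$, I would solve $Q_\a(z)w=h+\k\d$ ($h\in L^2(\G)$, $\k\in\C$) explicitly. Since $\Re(z)>0$, by Proposition~\ref{prop:Raz-H2} the bounded solutions on the $j$-th edge of $w_j''-z^2w_j=-h_j$ are $w_j=z^{-2}\rho_z*h_j+A_j e^{-z\,\cdot\,}$; continuity at the vertex fixes each $A_j$ through the common value $w(0)$, and then the vertex condition \eqref{eq:vertex-cond-kappa}, combined with the identity $(\rho_z*h_j)'(0)=z(\rho_z*h_j)(0)$ from Lemma~\ref{lem:conv-dotH1}(iii), collapses to a single scalar equation for $w(0)$ whose coefficient is $(\a-N)z$ --- nonzero precisely when $\a\neq N$. (Conceptually this is Sherman--Morrison: $Q_\a(z)=Q_0(z)-\a z\,\d\otimes e_0$ with $e_0\colon\f\mapsto\f(0)$ is a rank-one perturbation of the Kirchhoff operator $Q_0(z)$ obtained for $\a=0$, which is invertible for $z\in\C_+$ because $\Wc_0$ is skew-adjoint with spectrum $i\R$; one computes that $R_0(z)\d$ is the radial function $\frac1{Nz}e^{-z\,\cdot\,}$, so the perturbation determinant equals $1-\a z\,e_0(R_0(z)\d)=1-\frac\a N$, giving $R_\a(z)=R_0(z)+\frac{N\a z}{N-\a}\,(R_0(z)\d)\otimes(e_0R_0(z))$.) With $R_\a(z)$ available, the proof of Proposition~\ref{prop:res-Wca} transfers almost verbatim: formula~\eqref{eq:Rca} defines a bounded operator on $\HH$, it coincides with $(\Wc_\a-z)\inv$ on the dense subspace $H^1(\G)\times L^2(\G)$ through~\eqref{eq:Rca-F} (the identities of Proposition~\ref{prop:Raz-H2} use only that $R_\a(z)$ is defined), hence on all of $\HH$ by closedness of $\Wc_\a$; injectivity of $\Wc_\a-z$ --- which earlier followed from accretivity, no longer at our disposal on $\C_+$ --- is instead verified directly: an eigenvector $U=(u,zu)$ has $u_j=A_je^{-z\,\cdot\,}$, continuity makes all $A_j$ equal, and the vertex condition forces $(\a-N)zA=0$, so $U=0$. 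This gives $\C_+\subset\rho(\Wc_\a)$.

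For the norm bound, write $(\Wc_\a-z)\inv=(\Wc_0-z)\inv+\big(\text{rank-one correction}\big)$ according to the decomposition of $R_\a(z)$ above. The first term is bounded by $\Re(z)\inv$ since $\Wc_0$ is skew-adjoint (Theorem~\ref{th:Wa-max-diss}). The correction carries the scalar prefactor $\frac{N\a}{N-\a}$, whose modulus one controls by $\frac{|\a|}{|N-\a|}\leq1+\frac N{|N-\a|}$, together with the explicit size of $R_0(z)\d=\frac1{Nz}e^{-z\,\cdot\,}$ and the estimates of Lemma~\ref{lem:conv-dotH1}(ii),(iv) for the factor $e_0R_0(z)$ (equivalently, for the boundary trace appearing in $R_0(z)\partial^2$). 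Collecting the contributions yields $\nr{(\Wc_\a-z)\inv}_{\Lc(\HH)}\leq\frac C{\Re(z)}\big(1+\frac1{|\a-N|}\big)$. Together with Propositions~\ref{prop:minor-norm}, \ref{prop:spectrum-iR} and~\ref{prop:adjoint} this finishes Theorem~\ref{th:spectrum}.

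I expect the delicate point to be the \emph{uniformity in $z$} of this last estimate. Naive trace inequalities for the boundary value entering the rank-one correction introduce spurious factors $|z|/\Re(z)$ that explode as $z\to i\R$; to kill them one must use that the functions involved lie in the operator domain (hence in $H^1(\G)$ with quantitatively controlled norm) and exploit the precise algebraic shape of the correction so that the dangerous contributions cancel, leaving only the clean $1/\Re(z)$ singularity at the imaginary axis. The $\a$-dependence is, by contrast, harmless: it is entirely concentrated in the single factor $(N-\a)\inv$.
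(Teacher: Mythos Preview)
Your route is genuinely different from the paper's. The paper solves $(\Wc_\a - z)U = F$ directly: it writes each $u_j$ via the Green's function $\rho_z$ with a free constant, fixes that constant through continuity and the Robin condition (producing the factor $(\a-N)^{-1}$ in~\eqref{expr:u0}), and then estimates every piece by Lemma~\ref{lem:conv-dotH1}. You instead take the skew-adjoint Kirchhoff operator $\Wc_0$ as baseline and add a rank-one correction. This makes the origin of $(\a-N)^{-1}$ transparent (it is the Sherman--Morrison denominator $1-\a/N$), dispatches $\C_+\subset\rho(\Wc_\a)$ cleanly, and gets the bound on $(\Wc_0-z)^{-1}$ for free from skew-adjointness.

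Where your sketch stops short is exactly the point you flag, and it is a genuine gap until carried out. The correction is $c\,(e^{-z\cdot},\,z e^{-z\cdot})$ with $c=\frac{\a}{(N-\a)z}\,v_0(0)$, $v_0$ the second component of $(\Wc_0-z)^{-1}F$; its $\HH$-norm equals $\frac{|\a|\sqrt N}{|N-\a|\sqrt{\Re(z)}}\,|v_0(0)|$, so the whole estimate reduces to $|v_0(0)|\lesssim \Re(z)^{-1/2}\|F\|_\HH$. The naive trace bound $|v_0(0)|^2\lesssim\|v_0\|_{L^2}\|v_0'\|_{L^2}$ together with $v_0'=zu_0'+f'$ gives only $|v_0(0)|\lesssim\sqrt{|z|}\,\Re(z)^{-1}\|F\|_\HH$, i.e.\ precisely the spurious $\sqrt{|z|/\Re(z)}$ you fear, and no abstract ``rank-one shape'' argument removes it. What does work is the explicit formula: solving the Kirchhoff problem and integrating by parts (Lemma~\ref{lem:conv-dotH1}(iii)) to trade $\rho_z*f_j$ for $\rho_z*f_j'$ yields
\[
v_0(0) \;=\; -\frac{2}{Nz}\sum_{j=1}^N\big(\rho_z*(f_j'+g_j)\big)(0),
\]
the $f(0)$ terms having cancelled; then Lemma~\ref{lem:conv-dotH1}(iv) gives the desired $\Re(z)^{-1/2}$ bound. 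But this identity is nothing other than the paper's~\eqref{expr:u0} specialised to $\a=0$. So the ``cancellation'' you invoke is not a structural miracle of the perturbation but the same $\dot H^1$-versus-$L^2$ bookkeeping the paper performs directly; once you supply it, the two arguments have merged at their technical core.
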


\begin{proof}
Let $z \in \C_+$. Let $F = (f,g) \in \HH$. We prove that the equation 
\begin{equation} \label{eq:Wa-U-F}
(\Wc_\a - z) U = F
\end{equation}
has a unique solution $U = (u,v) \in \Dom(\Wc_\a)$. This will prove that $z \in \rho(\Wc_\a)$, and the explicit expression for $U$ will provide the estimates on $(\Wc_\a - z)\inv$. We identify $f \in \tilde H^1(\G)$ with any representative in $\dot H^1(\G)$. We can check all along the proof that if we add a constant to $f$, then it only changes $u$ by a constant and hence does not change its equivalence class in $\tilde H^1(\G)$. We could directly fix that $f(0) = 0$, but then the independence with respect to an additive constant would be less explicit.
For clarity of the exposition, 
we divide the proof into several steps 
distinguished by the bullet mark.

\stepp Assume that $U = (u,v) \in \Dom(\Wc_\a)$ satisfies \eqref{eq:Wa-U-F}. Let $j \in \Nc$. Then $u_j \in H^2_\loc(\R_+^*)$ and in the sense of distributions on $\R_+^*$ we have 
\begin{equation} \label{eq:u-g-f}
u_j'' - z^2 u_j = g_j + zf_j.
\end{equation}
We identify $u_j$ with its continuous representative. Then there exist $A_j,B_j \in \C$ such that, for all $x > 0$,
\begin{equation} \label{expr:uj}
u_j(x) = A_j e^{-zx} + B_j e^{zx} - \frac 1 {z^2} (\rho_z* g_j)(x) - \frac 1 z (\rho_z * f_j)(x),
\end{equation}
with $\rho_z$ defined by \eqref{def:rho-z}. 
Since $u_j$, $(\rho_z*g_j)$ and $(\rho_z*f_j)$ belong to $\dot H^1(\R_+^*)$ we necessarily have $B_j = 0$. If we set 
\begin{equation} \label{eq:tilde-Aj}
\tilde A_j = A_j + \frac {f(0)}{2z},
\end{equation}
then we have by Lemma \ref{lem:conv-dotH1}
\begin{equation} \label{expr:u'}
u_j'(x) =  - z \tilde A_j e^{- z x}  - \frac 1 {z} \big(\rho_z* (f_j' + g_j) \big) (x) + \int_0^x e^{-z(x-s)} g_j(s) \, ds
\end{equation}
and
\begin{equation} \label{expr:vj}
\begin{aligned}
v_j(x) 
& = zu_j(x) + f_j(x) \\
& = z \tilde A_je^{-z x} - \frac 1 {z} \big(\rho_z* (f_j' + g_j) \big) (x) + \int_0^x e^{-z(x-s)} f'_j(s) \, ds. 
\end{aligned}
\end{equation}
By continuity of $u$ at 0 we have for all $j \in \Nc$
\begin{equation} \label{expr:Aj}
\tilde A_j = u(0) + \frac {f(0)}{z} + \frac 1 {z^2} \big(\rho_z * (f_j'+g_j) \big)(0),
\end{equation}
so the Robin condition in \eqref{Dom-Wc} reads
\begin{align*}
\sum_{j=1}^N \left( - z u(0) - f(0) - \frac 2 {z} \big(\rho_z*(f_j'+g_j) \big)(0) \right) + \a z u(0) + \a f(0) = 0.
\end{align*}
This gives
\begin{equation} \label{expr:u0}
u(0) + \frac {f(0)} {z} =  \frac 2 {z^2(\a-N)} \sum_{\ell = 1}^N \big(\rho_z*(f_\ell' + g_\ell)\big)(0).
\end{equation}
Thus \eqref{expr:Aj} gives an explicit expression for $\tilde A_j$ and, 
by \eqref{expr:u'}--\eqref{expr:vj}, $U$ is uniquely determined by $F$. This proves the injectivity of $(\Wc_\a-z) : \Dom(\Wc_a - z) \to \HH$.   

\stepp 
Conversely, let $U = (u,v)$ be defined by \eqref{expr:uj}
and \eqref{expr:vj}, with $B_j = 0$ and $A_j$ given by \eqref{eq:tilde-Aj}, \eqref{expr:Aj} and \eqref{expr:u0}. Let $j \in \Nc$. Then $u_j$ is a solution of \eqref{eq:u-g-f}. By Lemma \ref{lem:conv-dotH1} we have $v_j \in H^1(\R_+^*)$ and $u_j' \in L^2(\R_+^*)$. Moreover, by \eqref{eq:u-g-f},
\[
u_j'' = zv_j + g_j \in L^2(\R_+^*).
\]
By construction, $u$ is continuous at 0, then so is $v$, and the Robin condition in \eqref{Dom-Wc} holds. This proves that $U \in \Dom(\Wc_\a)$ and then that $z \in \rho(\Wc_\a)$.

\stepp It remains to prove that 
\[
\nr{U}_\HH \lesssim \frac {\nr{F}_\HH}{\Re(z)} \left( 1 + \frac 1 {\abs {\a-N}} \right),
\]
where the symbol $\lesssim$ means that we have inequality up to a multiplicative constant which does not depend on $F$, $\a$ or $z$. For this we apply Lemma \ref{lem:conv-dotH1}. By \eqref{expr:u0} we have 
\[
\abs{u(0) + \frac {f(0)} {z}} \lesssim \frac {\nr{F}_\HH} {\abs z\abs {\a-N} \sqrt{\Re(z)}}.
\]
Then \eqref{expr:Aj} gives for all $j \in \Nc$ 
\[
|\tilde A_j| \lesssim \frac {\nr{F}_\HH} {\abs z \sqrt{\Re(z)}} \left( 1 + \frac 1 {\abs {\a-N}}\right).
\]
Finally, with \eqref{expr:u'} and \eqref{expr:vj},
\[
\nr{U}_\HH \lesssim \sum_{j=1}^N \big( \|u_j'\|_{L^2(\R_+^*)} + \|v_j\|_{L^2(\R_+^*)} \big) \lesssim \frac {\nr{F}_\HH} {\Re (z)} \left( 1 + \frac 1 {\abs {\a-N}}\right),
\]
and the proof is complete.
\end{proof}

        \detail 
        {
        $w = u'+zu$, $w'- zw  = \f$,
        \[
        w(t) = C_1 e^{zt} + \int_0^t e^{z(t-s)} \f(s) \, ds
        \]
        \begin{align*}
        u(x)
        & = C_2 e^{-zx} + \int_0^x e^{z(t-x)} w(t) \, dt\\
        & = C_2 e^{-zx} + \frac {C_1}{2z} (e^{zx}-e^{-zx}) + \int_{t=0}^x \int_{s=0}^t e^{z(2t-x-s)} \f(s)\, ds \, dt\\
        & = A e^{-zx} + Be^{zx} + \int_{s=0}^x \f(s) \int_{t=s}^x e^{z(2t-x-s)} \, dt\, ds\\
        & = \left( A - \frac 1 {2z} \int_{0}^x \f(s)  e^{zs} \, ds \right) e^{-zx} + \left( B + \frac 1 {2z} \int_{0}^x \f(s)  e^{-zs}  \, ds \right) e^{zx}\\
        \end{align*}
        Case $\Re(z) > 0$. We have to choose $B = -\frac 1 {2z} \int_{0}^\infty \f(s)  e^{-zs}  \, ds$
        \[
        u(x) = \left( A - \frac 1 {2z} \int_{0}^x \f(s)  e^{zs} \, ds \right) e^{-zx} - \frac {e^{zx}} {2z} \int_{x}^\infty \f(s)  e^{-zs}  \, ds  
        \]
        \[
        u'(x) = \left( -zA + \frac 1 {2} \int_{0}^x \f(s)  e^{zs} \, ds \right) e^{-zx} -  \frac {e^{zx}} {2} \int_{x}^\infty \f(s)  e^{-zs}  \, ds
        \]
        \[
        u_j(0) =  A_j  - \frac {1} {2z} \int_{0}^\infty \f_j(s)  e^{-zs}  \, ds 
        \]
        \[
        A_j = u(0) + \frac {1} {2z} \int_{0}^\infty \f_j(s)  e^{-zs}  \, ds.
        \]

        \[
        u_j'(0) = -zA_j  -  \frac {1} {2} \int_{0}^\infty \f_j(s)  e^{-zs}  \, ds
        \]
        }

\section{Damped wave equation}\label{Sec.evolution}

In this section we prove Theorem~\ref{th:damped-wave} 
about the time-dependant problem \eqref{eq:wave}--\eqref{eq:wave-CI}.

\begin{proof}[Proof of Theorem \ref{th:damped-wave}]
\stepp Assume that $u$ is a solution of \eqref{eq:wave}--\eqref{eq:wave-CI} on $[0,\tau[$ for some $\tau > 0$. For $t \in [0,\tau[$ and $j \in \Nc$ we identify $u_j(t)$ with its representative of class $C^1$ on $\R_+$. For $j \in \Nc$ there exist $\f_j \in C^1(-\tau,+\infty) \cap \dot H^1(-\tau,+\infty)\cap \dot H^2(-\tau,+\infty)$ and $\psi_j \in C^1(\R_+) \cap \dot H^1(\R_+) \cap \dot H^2(\R_+)$ such that $\f(0) = \psi(0) = 0$ and for all $t \in [0,\tau[$ and $x > 0$ we have
\begin{equation} \label{expr:uj-phipsi}
u_j(t,x) = u(0,0) + \f_j(x-t)  + \psi_j(x+t).
\end{equation}
The initial condition gives for $x > 0$
\[
\begin{cases}
\f_j'(x) + \psi_j'(x) = \partial_x u(0,x) = f_j'(x),\\
- \f_j'(x) + \psi_j'(x) = \partial_t u(0,x) = g_j(x),
\end{cases}
\]
so
\begin{equation} \label{expr:phi'-psi'}
\forall x > 0, \quad \psi_j'(x) = \frac {f_j'(x) + g_j(x)}{2}, \quad \f_j'(x) =  \frac {f_j'(x) - g_j(x)}{2}.
\end{equation}
On the other hand, the continuity at the central vertex gives for $j,k \in \Nc$ and $t > 0$, after differentiation,
\begin{equation} \label{eq:phipsi-continuite}
\forall j,k \in \Nc, \forall t \in [0,\tau[, \quad \quad \f_j'(-t) - \f_k'(-t) = \psi_j'(t) - \psi_k'(t).
\end{equation}
Finally, the damping condition yields
\begin{equation} \label{eq:phipsi-Robin}
\forall t \in [0,\tau[, \quad \left( \frac \a N - 1 \right) \sum_{j=1}^N  \f_j'(-t) = \left(  \frac \a N + 1 \right) \sum_{j=1}^N  \psi_j'(t).
\end{equation}
In particular, we see that if $\a = N$ then we necessarily have $\sum_{j=1}^N \psi_j'(t) = 0$ for all $t \in [0,\tau[$, so $\tau \leq t_0$. On the other hand, if $\a \neq N$ then $u$ is uniquely determined.

\stepp Conversely, we assume that $\a \neq N$ and prove that 
\eqref{eq:wave}--\eqref{eq:wave-CI} indeed has a solution on $\R_+$. For $j \in \Nc$ and $x \geq 0$ we set
\begin{equation} \label{expr:phi-psi}
\f_j(x) = \frac 12 \int_0^x \big( f_j'(s) - g_j(s) \big) \, ds, \quad \psi_j(x) = \frac 12 \int_0^x \big( f_j'(s) + g_j(s) \big) \, ds.
\end{equation}
and then $\f(x) = (\f_j(x))_{j \in \Nc}$ and $\psi(x) = (\psi_j(x))_{j \in \Nc}$, seen as column vectors. Then for $s \in \R_+^*$ we set 
\[
\f(-s) = - M_-\inv M_+ \psi(s),
\]
where 
\begin{equation}
M_\pm =
\begin{pmatrix}
1 & - 1 \\
&\ddots & \ddots \\
&&  1 & - 1 \\
\frac \a N \pm 1 &\dots & \frac \a N \pm 1 & \frac \a N \pm 1 
\end{pmatrix}.
\end{equation}
This is possible since $\det(M_-) = \a - N \neq 0$. We have $\f \in C^1(\R_-^*) \cap \dot H^1 (\R_-^*) \cap \dot H^2(\R_-^*)$. For $s > 0$ we have 
\[
\f(-s) = -M_-\inv M_+ \psi(s) \limt s 0 -M_-\inv M_+ \psi(0) = 0 = \f(0),
\]
and since $(f,g) \in \Dom(W_\a)$ 
\[
\f'(-s) = M_-\inv M_+ \psi'(s) \limt s 0 M_-\inv M_+ \psi'(0) = \f'(0^+).
\]
Thus $\f \in C^1(\R) \cap \dot H^1 (\R) \cap \dot H^2(\R)$.

Then for $t \geq 0$ and $x > 0$ we define $u$ by \eqref{expr:uj-phipsi} (the choice of $u(0,0)$ is not important since $u$ is defined up to an additive constant). We have $u \in C^1(\R_+, H^1(\G)) \cap C^0(\R_+, \dot H^2(\G^*))$. On the other hand in the sense of distributions we have 
\[
\partial_{tt} u = \partial_{xx} u.
\]
In particular, $\partial_{tt}u \in C^0(\R_+,L^2(\G))$, so $u \in C^2(\R_+,L^2(\G))$. All this proves that $u$ is a solution of \eqref{eq:wave}--\eqref{eq:wave-CI} on $\R_+$. Moreover for $t \geq 0$ we have 
\begin{align*}
E(u;t)
& = \nr{\partial_t u(t)}_{L^2(\G)}^2 + \nr{\partial_x u(t)}_{L^2(\G)}^2 \\
& = 2 \int_0^{+\infty} \big( \abs{\f'(x-t)}_{\C^N}^2 + \abs{\psi'(x+t)}_{\C^N}^2 \big) \, dx\\
& = 2 \int_{-t}^{+\infty} \abs{\f'(s)}_{\C^N}^2 \, ds + 2 \int_t^{+\infty} \abs{\psi'(s)}_{\C^N}^2\, ds.
\end{align*}
We have
\begin{align*}
\int_{-t}^t \abs{\f'(s)}_{\C^N}^2 \, ds 
& \leq \int_0^t \big( \abs{\f'(s)}_{\C^N}^2 + \nr{M_-\inv M_+}^2 \abs{\psi'(s)}_{\C^N}^2 \big) \, ds\\
& \lesssim \left( 1 + \frac 1 {\abs {\a - N}^2} \right) \int_0^t  \big( \abs{\f'(s)}_{\C^N}^2 + \abs{\psi'(s)}_{\C^N}^2 \big) \, ds\\
& \lesssim \left( 1 + \frac 1 {\abs {\a - N}^2} \right) \int_0^{t} \big( \abs{f'(s)}_{\C^N}^2 + \abs{g(s)}_{\C^N}^2 \big) \, ds.
\end{align*}
Since
\begin{align*}
2 \int_t^{+\infty} \big( \abs {\f'(s)}_{\C^N}^2 + \abs{\psi'(s)}_{\C^N}^2 \big) \, ds = \int_t^{+\infty} \big( \abs{f'(s)}_{\C^N}^2 + \abs{g'(s)}_{\C^N}^2 \big) \, ds,
\end{align*}
the desired inequality \eqref{eq:energy} follows.

\stepp Now assume that $\a = N$ and that $t_0 > 0$. Let $\tau \in ]0,t_0]$. On $\R_+$ we define $\f_j$ and $\psi_j$ by \eqref{expr:phi-psi}. Let $\th \in C^\infty([0,\tau[,\R)$ such that $\th(0) = 0$ and $\th'(0) = \f_1'(0)$. For $j \in \Nc$ and $s \in ]0,t_0[$ we set 
\[
\f_j(-s) = \th(-s) + \psi_j(s) - \psi_1(s).
\]
In particular $\f \in C^1(]-\tau,+\infty[;\C^N)$ and $\f \in \dot H^1(]-\tau_1,+\infty[;\C^N) \cap \dot H^2(]-\tau_1,+\infty[;\C^N)$ for all $\tau_1 \in ]0,\tau[$. Finally we define $u$ by \eqref{expr:uj-phipsi}. Then $u$ is a solution of \eqref{eq:wave}--\eqref{eq:wave-CI} on $[0,\tau[$. Moreover for $t \in [0,\tau[$ we have 
\begin{align*}
E(u,t) 
\geq  \int_0^{+\infty} \big( \abs{\f_1'(x-t)}^2 + \abs{\psi_1'(x+t)}^2 \big) \, dx 
\geq \int_{-t}^0 \abs{\f_1'(s)}^2 \, ds \geq \int_0^t \abs{\th'(s)} \,ds.
\end{align*}
In particular we can choose $\th$ in such a way that $E(u;t)$ goes to $+\infty$ as $t$ goes to $\tau$.
\end{proof}

\section{Relationship with relativistic quantum mechanics}\label{Sec.Dirac}
%
If the space variable~$x$ is restricted to a bounded interval
and~$\alpha$ is real,  
the classical interpretation of the wave equation~\eqref{wave}
is that of a vibrating string, 
subject to a highly localised damping~\cite{Bamberger-Rauch-Taylor_1982} .
Strictly speaking, the genuine damping (or friction) corresponds to 
negative~$\alpha$, while positive~$\alpha$ models a supply
of energy into the system. 
However, we use the word ``damping'' even in the more general 
situation whenever~$\alpha$ has a non-zero real part,
so that the physical system is not conservative. 
  
The goal of this last section is to provide 
a physical motivation in the unconventional setting 
of unbounded geometries and/or non-real~$\alpha$.
To this purpose, we recall a more or less well known 
relationship between the damped wave equation 
and the Dirac equation in relativistic quantum mechanics
(see, e.g., \cite{Gesztesy-Holden_2011,Gesztesy_2012,Cuenin-Siegl_2018}).
(Here the word relativistic stands for the original usage of the equation,
recent years have brought motivations to consider the same 
mathematical problem for non-relativistic systems like graphene.)

For simplicity, let us restrict to the simplest graph $N=2$,
which can be identified with the real axis~$\R$.
Instead of substituting $(u,v)=(\psi,\partial_t\psi)$ 
as above \eqref{eq:wave-Wc},
let us write $(\xi,\eta)=(\partial_t\psi,\partial_x\psi$).
Then the wave equation~\eqref{wave} is formally transferred 
to the first-order system 
\begin{equation}\label{wave.Dirac}
  \partial_t
  \begin{pmatrix}
    \xi \\ \eta
  \end{pmatrix}
  =
  i \mathcal{D}_\alpha 
  \begin{pmatrix}
    \xi \\ \eta
  \end{pmatrix}
  \qquad\mbox{with} \qquad
  \mathcal{D}_\alpha =
  \begin{pmatrix}
    -i\alpha\delta & -i\partial_x \\
    -i\partial_x & 0
  \end{pmatrix}
  .
\end{equation}
Here $\mathcal{D}_\alpha$ is a Dirac-type operator 
considered in the Hilbert space $L^2(\R) \times L^2(\R)$. 
More specifically, $\mathcal{D}_0$ with domain $H^1(\R) \times H^1(\R)$
is the (self-adjoint) Dirac Hamiltonian modelling the propagation of
relativistic (quasi-)particles in quantum mechanics. 
The perturbation 
$
\begin{psmallmatrix}
    -i\alpha\delta & 0 \\
    0 & 0
\end{psmallmatrix}
$
(properly introduced via the jump condition 
$\eta(0^+)-\eta(0^-)=-\alpha \xi(0)$
together with the continuity $\xi(0^+) = \xi(0^-) = \xi(0)$)
represents neither a purely electric nor scalar potential,
but it is self-adjoint 
(and therefore quantum-mechanically relevant)
whenever~$\alpha$ is purely imaginary. 
Moreover, the real part of~$\alpha$ is potentially eligible
in quasi-Hermitian quantum mechanics~\cite{KSTV}. 

It is interesting to compare the present model~\eqref{wave.Dirac}
with the $\delta$-shell interaction Hamiltonian
\begin{equation}\label{wave.Dirac.bis}
  \tilde{\mathcal{D}}_\tau =
  \begin{pmatrix}
    \tau\delta & -i\partial_x \\
    -i\partial_x & \tau\delta
  \end{pmatrix}
\end{equation}
intensively studied for real~$\tau$
during the last decade  
(see \cite{Arrizibalaga-Mas-Vega_2014,Ourmieres-Vega_2018,
Holzmann-Ourmires-Bonafos-Pankrashkin_2018,
Behrndt-Holzmann_2020}
for counterparts of~\eqref{wave.Dirac.bis} in~$\R^3$ and its variants).
Here $\tilde{\mathcal{D}}_\tau$ is properly introduced 
via the transmission condition
$$
  \begin{pmatrix}
    \tau/2 & -i \\
    -i & \tau/2
  \end{pmatrix}
  \begin{pmatrix}
    \xi(0^+) \\ \eta(0^+)
  \end{pmatrix}
  +
  \begin{pmatrix}
    \tau/2 & i \\
    i & \tau/2
  \end{pmatrix}
  \begin{pmatrix}
    \xi(0^-) \\ \eta(0^-)
  \end{pmatrix}
  = 
  \begin{pmatrix}
    0 \\ 0
  \end{pmatrix}
$$
that the functions from the operator domain 
of~$\tilde{\mathcal{D}}_\tau$ must satisfy.
This definition makes sense even for complex~$\tau$.
If~$\tau$ is real, the operator $\tilde{\mathcal{D}}_\tau$
is self-adjoint and its (purely continuous) spectrum coincides
with the real axis~$\R$. In fact, the same spectral picture 
extends to be true for any $\tau \in \C\setminus\{\pm 2i\}$.
However, if $\tau = 2i$ (respectively, $\tau = -2i$), 
then the whole upper (respectively, lower) complex plane
belongs to the spectrum of~$\tilde{\mathcal{D}}_\tau$.
So we again see the presence of the not-any-more-magical number~$2$.

\vspace{-3ex}
\subsection*{Acknowledgments} 
The first author (D.K.) was supported by the EXPRO grant No.~20-17749X 
of the Czech Science Foundation. The second author (J.R.) was supported by the ANR LabEx CIMI (grant ANR-11-LABX-0040) within the French State Programme "Investissements d'Avenir".

\vspace{-3ex}
%
\providecommand{\bysame}{\leavevmode\hbox to3em{\hrulefill}\thinspace}
\providecommand{\MR}{\relax\ifhmode\unskip\space\fi MR }
\providecommand{\MRhref}[2]{%
  \href{http://www.ams.org/mathscinet-getitem?mr=#1}{#2}
}
\providecommand{\href}[2]{#2}

\end{document}